\documentclass[11pt, reqno, a4paper]{amsart}
\usepackage{amsthm,amsfonts,amsmath,amssymb}
\usepackage[utf8]{inputenc}
\usepackage[usenames]{color}
\usepackage{mathrsfs}

\hoffset = -40pt
\textwidth = 15.5cm
\textheight = 22.125cm
\usepackage[pdftex,bookmarks=true]{hyperref}

\usepackage{enumerate}
\usepackage{tikz-cd}
\usepackage{graphicx}

\numberwithin{equation}{section}

\theoremstyle{definition}
\newtheorem{definition}{Definition}[section]

\theoremstyle{remark}
\newtheorem{remark}[definition]{Remark}

\theoremstyle{plain}
\newtheorem{theorem}[definition]{Theorem}
\newtheorem{result}[definition]{Result}
\newtheorem{lemma}[definition]{Lemma}
\newtheorem{proposition}[definition]{Proposition}
\newtheorem{corollary}[definition]{Corollary}
\newtheorem{fact}[definition]{Fact}

\newcommand{\eps}{\varepsilon}

\newcommand{\zt}{\zeta}

\newcommand\partl[2]{\frac{\partial{#1}}{\partial{#2}}}
\newcommand{\dbar}{\overline{\partial}}

\newcommand{\bdy}{\partial}
\newcommand{\OM}{\Omega}
\newcommand{\disk}{\mathbb{D}}

\newcommand{\hpln}{\boldsymbol{{\sf H}}}
\newcommand{\Nb}{\mathcal{W}}
\newcommand{\patch}{\mathcal{V}}
\newcommand{\dee}{\mathcal{D}}

\newcommand{\smoo}{\mathcal{C}}
\newcommand{\hol}{\mathscr{O}}

\newcommand{\re}{{\sf Re}}
\newcommand{\im}{{\sf Im}}
\newcommand{\symq}{\pi_{\!{\raisebox{-1.5pt}{$\scriptstyle{Sym}$}}}}
\newcommand\elsym[1]{\sigma_{{#1}}}


\newcommand{\bcdot}{\boldsymbol{\cdot}}

\newcommand{\excep}{\mathscr{E}}
\newcommand{\smu}{u^{\raisebox{-1pt}{$\scriptstyle\infty$}}}


\newcommand{\Cn}{\mathbb{C}^n}

\newcommand{\C}{\mathbb{C}}

\newcommand{\cplx}{\mathbb{C}}

\newcommand{\atlas}{\mathfrak{A}}
\newcommand{\Sym}{{\rm Sym}}

\newcommand{\N}{\mathbb{N}}
\newcommand{\Z}{\mathbb{Z}}


\begin{document}

\title[Proper holomorphic maps onto symmetric products]{Proper holomorphic
mappings onto symmetric \\ products of a Riemann surface}

\author[G.~Bharali]{Gautam Bharali}
\address{Department of Mathematics, Indian Institute of Science, Bangalore 560012, India}
\email{bharali@iisc.ac.in}

\author[I.~Biswas]{Indranil Biswas}
\address{School of Mathematics, Tata Institute of Fundamental Research, 1 Homi Bhabha
  Road, Mumbai 400005, India}
\email{indranil@math.tifr.res.in}

\author[D.~Divakaran]{Divakaran Divakaran}
\address{Department of Mathematics, Indian Institute of Science Education and Research Bhopal,
  Bhopal 462066, India}
\email{divakaran@iiserb.ac.in}

\author[J.~Janardhanan]{Jaikrishnan Janardhanan}
\address{Department of Mathematics, Indian Institute of Technology Madras, Chennai 600036, India}
\email{jaikrishnan@iitm.ac.in}

\keywords{Proper holomorphic maps, Riemann surfaces, symmetric product, taut manifolds}
\subjclass[2010]{Primary 14J50, 32H35; Secondary 32Q45}

\begin{abstract}
We show that the structure of proper holomorphic maps between the $n$-fold symmetric
products, $n\geq 2$, of a pair of non-compact Riemann surfaces $X$ and $Y$, provided
these are reasonably nice, is very rigid. Specifically, any such map is determined by a proper
holomorphic map of $X$ onto $Y$. This extends existing results concerning bounded planar
domains, and is a non-compact analogue of a phenomenon observed in symmetric
products of compact Riemann surfaces. Along the way, we also provide a condition for the 
complete hyperbolicity of all $n$-fold symmetric products of a non-compact Riemann surface.
\end{abstract}

\maketitle

\vspace{-0.5cm}
\section{Introduction}\label{s:intro}

It is well known that the $n$-fold symmetric product of a Riemann surface, $n\geq 2$,
is an $n$-dimensional complex manifold. One has a precise
description of all proper holomorphic maps between $n$-fold products of bounded
planar domains\,---\,provided by the Remmert--Stein Theorem
\cite{remmertStein1960propMap}  (also see
\cite[pp.\!~71--78]{narasimhan1971SCV})\,---\,and, more recently, of finite proper
holomorphic maps between products of Riemann surfaces; see
\cite{janardhanan2014properMap}, for instance. It is therefore natural to investigate
the structure of such maps between symmetric products of Riemann surfaces. To this
end, we are motivated by the following result of Edigarian and Zwonek
\cite{edigarian2005geometry} (the notation used will be explained below):

\begin{result}[paraphrasing {\cite[Theorem~1]{edigarian2005geometry}}]\label{r:e-z}
  Let $\mathbb{G}^n$ denote the $n$-dimensional symmetrized polydisk and
  let $f : \mathbb{G}^n\to \mathbb{G}^n$ be a proper holomorphic map. Then,
  there exists a finite Blaschke product $B$ such that
  \[
    f\big(\pi^{(n)}(z_1,\dots, z_n)\big)\,=\,\pi^{(n)}\big(B(z_1),\dots, B(z_n)\big) \; \;
    \forall (z_1,\dots, z_n)\in \disk^n.
  \]
\end{result}

In this result, and in what follows, we denote the open unit disk with centre
$0\in \C$ by $\disk$. Throughout this paper $\sigma_j$, $j = 1,\dots, n$, will denote the
elementary symmetric polynomial of degree $j$ in $n$ indeterminates (when there is
no ambiguity, we shall\,---\,for simplicity of notation\,---\,suppress the parameter
$n$). The map $\pi^{(n)} : \C^n\to \C^n$ is defined as:
\[
  \pi^{(n)}(z_1,\dots, z_n)\,:=\,\big(\sigma_1(z_1,\dots, z_n), \sigma_2(z_1,\dots, z_n),
  \dots, \sigma_n(z_1,\dots, z_n)\big), \; \; (z_1,\dots, z_n)\in \C^n.
\]
The {\em symmetrized polydisk}, $\mathbb{G}^n$, is defined as
$\mathbb{G}^n := \pi^{(n)}(\disk^n)$. It is easy to see that $\mathbb{G}^n$ is a domain
in $\C^n$, whence $\mathbb{G}^n$ is a holomorphic embedding of the $n$-fold symmetric
product of $\disk$ into $\C^n$.

Given a Riemann surface $X$, we shall denote its $n$-fold symmetric product by
$\Sym^n(X)$.  The complex structure on $X$ induces a complex structure on
$\Sym^n(X)$, which is described in brief in Section~\ref{s:symm_prelim} below.

In this paper, we shall extend Result~\ref{r:e-z}\,---\,see
Corollary~\ref{c:two_symmprods} below\,---\,to proper holomorphic maps between
the $n$-fold symmetric products of certain non-compact Riemann surfaces. At this
juncture, the reader might ask whether there is an analogous generalization of
Result~\ref{r:e-z} to $n$-fold symmetric products of {\em compact} Riemann
surfaces. Before we answer this question, we state the following result and 
note that Corollary~\ref{c:two_symmprods} is its non-compact analogue. Indeed,
the following result (the notation therein is explained below) was among our
motivations for the investigation in this paper.

\begin{fact}[an {\bf adaptation} of the results in \cite{cilibertoSernesi:symmetric93}
by Ciliberto--Sernesi]\label{f:compact_isom}
Let $X$ and $Y$ be compact Riemann surfaces with
${\rm genus}(X) = {\rm genus}(Y) = g$, where $g > 2$. Let
$F : \Sym^n(X)\to \Sym^n(Y)$ be a surjective holomorphic map, where
$n = 1, 2, 3,\dots, 2g-3$, $n\neq g-1$. Then:
\begin{itemize}
  \item[(1)] $X$ is biholomorphic to $Y$;
  
  \item[(2)] The map $F$ is a biholomorphism; and
  
  \item[(3)] There exists a biholomorphic map $\phi : X\to Y$ such that
  \[
    F(\langle x_1,\dots, x_n\rangle)\,=\,\langle\phi(x_1),\dots \phi_n(x_n)\rangle
    \; \; \forall \langle x_1,\dots, x_n\rangle\in \Sym^n(X).
  \]
\end{itemize}
\end{fact}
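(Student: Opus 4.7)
The plan is to exploit the Abel--Jacobi fibration of each symmetric product into its Albanese variety, descend $F$ to a morphism between the Jacobians of $X$ and $Y$, and then invoke Torelli's theorem together with the main results of \cite{cilibertoSernesi:symmetric93} to force $F$ to be induced by a biholomorphism of $X$ onto $Y$.

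I would begin by observing that, since $\Sym^n(X)$ and $\Sym^n(Y)$ are compact connected complex manifolds of the same dimension $n$, the surjective holomorphic map $F$ is automatically proper and generically finite, hence a finite branched cover. Let $u^X_n \colon \Sym^n(X)\to J(X)$ and $u^Y_n \colon \Sym^n(Y)\to J(Y)$ denote the Abel--Jacobi maps for a choice of base points; these realise $J(X)$ and $J(Y)$ as the Albanese varieties of the respective symmetric products. By the universal property of the Albanese, $F$ induces a holomorphic map $\widetilde{F} \colon J(X)\to J(Y)$ which, after a translation, is a homomorphism of abelian varieties satisfying $u^Y_n\circ F = \widetilde{F}\circ u^X_n$. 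Since $F$ is surjective, $\widetilde{F}(J(X))$ is an abelian subvariety of $J(Y)$ containing a generating subset of $J(Y)$, hence $\widetilde{F}$ is surjective; and since $\dim J(X) = g = \dim J(Y)$, $\widetilde{F}$ is in fact an isogeny.

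The geometry of $u^X_n$ differs across the two allowed ranges of $n$. For $1\le n\le g-2$, Abel's theorem shows that $u^X_n$ is birational onto its image $W^X_n\subset J(X)$; for $g\le n\le 2g-3$, $u^X_n$ is surjective onto $J(X)$ with generic fibre $\pro^{n-g}$ of bounded dimension. The intertwining $u^Y_n\circ F = \widetilde{F}\circ u^X_n$ forces $F$ to carry fibres of $u^X_n$ into fibres of $u^Y_n$. Combining this compatibility with the structural analysis of $\mathrm{Aut}\,\Sym^n(X)$ carried out in \cite{cilibertoSernesi:symmetric93}\,---\,adapted to a pair of distinct curves rather than a single one\,---\,one concludes that $\widetilde{F}$ carries the principal polarisation $\Theta_X$ of $J(X)$ to $\Theta_Y$ of $J(Y)$. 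Torelli's theorem then produces a biholomorphism $\phi\colon X\to Y$ with $\widetilde{F} = J(\phi)$; in particular $\widetilde{F}$ has degree one. The induced map $\Sym^n(\phi)\colon\Sym^n(X)\to\Sym^n(Y)$ has the same Albanese as $F$; a generic-fibre comparison\,---\,using injectivity of $u^Y_n$ on a dense open set when $n\le g-2$, and a direct check on $\pro^{n-g}$-fibres when $n\ge g$\,---\,yields $F = \Sym^n(\phi)$ on a dense subset and hence everywhere, establishing (1)--(3) simultaneously.

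The main obstacle is the middle step: upgrading the isogeny $\widetilde{F}$ to an isomorphism induced by a curve biholomorphism. The exclusion of $n = g-1$ is essential here, for in that case $W^X_{g-1}$ is itself a translate of $\Theta_X$ and is preserved by the Kummer involution $D\mapsto K_X - D$; this involution descends to an automorphism of $\Sym^{g-1}(X)$ not of the form $\Sym^{g-1}(\psi)$ for any biholomorphism $\psi\colon X\to X$, and would obstruct the argument there. Outside this exceptional value, the adaptation of \cite{cilibertoSernesi:symmetric93} is precisely what rules out such extraneous symmetries.
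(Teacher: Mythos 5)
Your proposal follows essentially the same route as the paper: the paper's Remark~1.3 simply defers to Ciliberto--Sernesi (whose proofs of Martens's theorem and of their Theorem~(1.3) are precisely Albanese/Torelli arguments of the kind you sketch), and the hard middle step you flag\,---\,showing that the induced isogeny on Jacobians respects the theta polarisation so that Torelli applies, and that the fibrewise data forces $F=\Sym^n(\phi)$\,---\,is exactly what the paper also leaves to Ciliberto--Sernesi, together with the ``straightforward'' check that their argument nowhere uses that $F$ is a biholomorphism rather than merely surjective. Your explanation of why $n=g-1$ must be excluded (the residuation automorphism $D\mapsto |K_X-D|$ of $\Sym^{g-1}(X)$) is the same obstruction the paper exhibits explicitly for $g=3$, $n=2$.
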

\noindent{In the above, and in what follows,
we denote by $\langle x_1,\dots, x_n\rangle$ the orbit of 
$(x_1,\dots, x_n)\in X^n$ under the $S_n-$action on $X^n$ that permutes
the entries of $(x_1,\dots, x_n)$. The map
\[
  X^n\ni (x_1,\dots, x_n)\,\longmapsto\,\langle x_1,\dots, x_n\rangle
  \; \;\forall (x_1,\dots, x_n)\in X^n
\]
will be denoted by $\symq^X$. When there is no ambiguity, we shall drop
the superscript.

\begin{remark}
The paper \cite{cilibertoSernesi:symmetric93} does not contain a 
statement of Fact~\ref{f:compact_isom} in the specific form given above.
Therefore, we provide a justification. We first consider the
case $1\leq n\leq g-2$. A special case of a theorem by Martens
\cite{martens:torelli63} gives us (1) and (3)
of Fact~\ref{f:compact_isom}, assuming that $F$ is a biholomorphism.
In \cite[Section~2]{cilibertoSernesi:symmetric93}, Ciliberto and Sernesi give a
different proof of Martens's theorem. It is straightforward to check that the
proof in \cite{cilibertoSernesi:symmetric93} yields (1)--(3) above\,---\,taking
$1\leq n\leq g-2$\,---\,even when $F$ is just a surjective holomorphic map.

Now consider the case $g\leq n\leq 2g-3$. This time, the first two
paragraphs following the heading ``Proof of Theorem~(1.3)''
in \cite{cilibertoSernesi:symmetric93} give us
(1) and (3)
of Fact~\ref{f:compact_isom}, assuming again that $F$ is a biholomorphism.
Again, it is straightforward to check that the requirement that $F$ be a
biholomorphism is not essential. The argument in those paragraphs gives
us (1)--(3) above\,---\,taking
$g\leq n\leq 2g-3$\,---\,even when $F$ is just a surjective
holomorphic map.  \hfill $\blacktriangleleft$ 
\end{remark}

The restrictions on the pair $(g, n)$ in Fact~\ref{f:compact_isom} are
essential. It is classically known that there exist nonisomorphic compact
Riemann surfaces of genus 2 having isomorphic Jacobians, hence isomorphic
$2$-fold symmetric products. 
Next, consider a non-hyperelliptic compact
Riemann surface $X$ of  genus 3. Given any $\langle x_1, x_2\rangle \in \Sym^2(X)$,
there is a unique
point $\langle y_1, y_2\rangle \in \Sym^2(X)$ such that the divisor
$(x_1 + x_2 + y_1 + y_2)$ represents the
holomorphic cotangent bundle. The automorphism of $\Sym^2(X)$ given by
$\langle x_1,x_2\rangle\mapsto \langle y_1, y_2\rangle$ is {\em not} given
by any automorphism of $X$ (here, $g = 3$, $n = 2$, whence $n = g-1$).
Furthermore, we expect
any generalization of Fact~\ref{f:compact_isom} to be somewhat intricate 
because, among other things:
\begin{itemize}
  \item Any generalization wherein ${\rm genus}(X) \neq {\rm genus}(Y)$
  will place restrictions on the pair $({\rm genus}(X), {\rm genus}(Y))$ owing to
  the Riemann--Hurwitz formula.
  
  \item The geometry of $\Sym^n(X)$ varies considerably depending on
  whether $1\leq n\leq  {\rm genus}(X)-1$ or $n\geq {\rm genus}(X)$.
\end{itemize}
In short, any generalization of Fact~\ref{f:compact_isom} would
rely on techniques very different from those involved in
proving Corollary~\ref{c:two_symmprods}. Thus, we shall address
the problem of the structure of surjective holomorphic maps in the
compact case in forthcoming work.
   
We now focus on $n$-fold symmetric products of {\em non-compact}
Riemann surfaces. We should
mention here that Chakrabarti and Gorai have extended Result~\ref{r:e-z}
to $n$-fold symmetric products of bounded planar domains in 
\cite{debraj2015function} 
Their result as well as Result~\ref{r:e-z} rely on an interesting
adaptation\,---\,introduced in \cite{edigarian2005geometry}\,---\,of an
argument by Remmert--Stein. The latter argument relies on two
essential analytical ingredients:
\begin{itemize}
  \item[$(i)$] The ability to extract subsequences\,---\,given an auxiliary
  sequence constructed from the given proper map\,---\,that converge
  locally uniformly; and
  
  \item[$(ii)$] A vanishing-of-derivatives argument that stems from the
  mean-value inequality.
\end{itemize}
These ingredients continue to be relevant when planar domains are replaced
by Riemann domains and, indeed, parts of our proofs emulate the argument
in \cite{edigarian2005geometry}.

However, our proofs of the theorems below
do {\bf not} reduce to a mere application of Result~\ref{r:e-z} to appropriate
coordinate patches. An explanation of this is presented in the paragraph that
follows \eqref{e:const} below. Equally significantly, we need to identify a class
of Riemann surfaces $X$ for which some form of the ingredient $(i)$ above is
available for $\Sym^n(X)$, $n\geq 2$. This is the objective of our first
theorem\,---\,which might also be of independent interest.      
   
\begin{theorem}\label{t:symm_taut}
Let $X$ be a connected bordered Riemann surface with
$\smoo^2$-smooth boundary. Then $\Sym^n(X)$ is Kobayashi
complete, and hence taut, for each $n\in \Z_+$.
\end{theorem}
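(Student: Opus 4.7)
The plan is to reduce Kobayashi completeness of $\Sym^n(X)$ to that of $X^n$, exploiting the quotient presentation $\Sym^n(X) \cong X^n/S_n$ with $S_n$ acting by permutation of coordinates, and then to conclude tautness via the standard implication ``complete hyperbolic implies taut''. The argument proceeds in three stages: complete hyperbolicity of $X$; then of $X^n$; and finally descent through $\symq : X^n \to \Sym^n(X)$.

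For the first stage, the $\smoo^2$-smoothness of $\bdy X$ allows me to embed $X$ as a relatively compact open subset of its Schottky double $\widehat{X}$, a compact Riemann surface. Because $X$ has nonempty complement in $\widehat{X}$, its universal cover is $\disk$, and the standard near-boundary estimate for the Poincar\'e (= Kobayashi) metric\,---\,available once a local pseudoconvex defining function exists at each point of $\bdy X$\,---\,shows that $d_X$ blows up at the rate $1/\mathrm{dist}(\cdot,\, \bdy X)$ near $\bdy X$, yielding completeness of $(X, d_X)$. The second stage is immediate, since the Kobayashi pseudodistance on a finite product equals the maximum of those on the factors.

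For the third stage I will establish the formula
\[
  d_{\Sym^n(X)}\bigl(\symq(P), \symq(Q)\bigr) \,=\, \min_{\sigma \in S_n}\, d_{X^n}(P, \sigma \bcdot Q), \qquad P, Q \in X^n.
\]
The inequality ``$\leq$'' is immediate, since $\symq$ is distance-decreasing and $S_n$-invariant. Granted the reverse inequality, hyperbolicity of $\Sym^n(X)$ is clear: $\symq(P) \neq \symq(Q)$ forces $P \neq \sigma \bcdot Q$ for every $\sigma \in S_n$, whence every summand on the right is strictly positive. For completeness, given a $d_{\Sym^n(X)}$-Cauchy sequence $\{\symq(P_k)\}$, pass to a subsequence along which consecutive distances are bounded by $2^{-k}$, and pick representatives $P_k' \in \symq^{-1}(\symq(P_k))$ inductively so that $d_{X^n}(P_{k-1}', P_k') = \min_\sigma d_{X^n}(P_{k-1}', \sigma \bcdot P_k)$. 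By the formula this minimum equals $d_{\Sym^n(X)}(\symq(P_{k-1}), \symq(P_k))$, so $\{P_k'\}$ is $d_{X^n}$-Cauchy and converges in $X^n$; its image under $\symq$ is the required limit.

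The main obstacle will be proving the ``$\geq$'' direction of the displayed formula, which requires lifting analytic disks (and the chains of disks that compute the Kobayashi pseudodistance) from $\Sym^n(X)$ to $X^n$. Away from the critical set $C \subset \Sym^n(X)$ of orbits with repeated entries, $\symq$ is an unramified $n!$-sheeted covering, so any holomorphic $f : \disk \to \Sym^n(X)$ with $f^{-1}(C)$ discrete lifts uniquely on $\disk \setminus f^{-1}(C)$ once $f(0)$ is lifted (using the simple connectedness of $\disk$), and the Riemann removable-singularity theorem extends the lift across $f^{-1}(C)$ (boundedness of the lift near each such point being supplied by smoothness of $X^n$ together with properness of $\symq$). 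The delicate case is a disk $f$ whose image lies entirely in $C$; one handles this by stratifying $C$ by multiplicity-partition type, observing that $\symq$ restricts to a finite unramified covering over each stratum, lifting $f$ over the dense generic stratum of its image, and invoking Riemann removable singularity once more to extend across the (lower-dimensional) transition loci.
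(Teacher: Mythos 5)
The descent through $\symq$ is where your proposal breaks down, and the break is fatal. You claim
\[
  d_{\Sym^n(X)}\bigl(\symq(P), \symq(Q)\bigr) \,=\, \min_{\sigma \in S_n}\, d_{X^n}(P, \sigma \bcdot Q),
\]
and justify the $\geq$ direction by lifting analytic disks $f : \disk \to \Sym^n(X)$ through $\symq$. But $\symq$ is a \emph{branched} covering, and the lifting argument fails precisely at the branch locus. You say the lift exists on $\disk\setminus f^{-1}(C)$ ``using the simple connectedness of $\disk$''\,---\,but if $f^{-1}(C)$ is a nonempty discrete set, then $\disk\setminus f^{-1}(C)$ is \emph{not} simply connected, the local lifts acquire monodromy around the deleted points, and no single-valued lift need exist. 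Riemann's removable singularity theorem cannot be invoked on a multivalued function. Concretely, take $X = \disk$, $n = 2$. The disk $f(t) = \symq(\sqrt{t}, -\sqrt{t}) = (\sigma_1, \sigma_2) = (0, -t)$ is a perfectly good holomorphic map $\disk \to \mathbb{G}^2$, yet its only candidate lift $t\mapsto(\sqrt{t},-\sqrt{t})$ is not single-valued on any punctured neighbourhood of $0$. This same disk kills the claimed formula: it joins $(0,0)$ to $(0,-a^2)$ and gives the upper bound $d_{\mathbb{G}^2}((0,0),(0,-a^2)) \leq \poin(0,a^2)$, whereas $\min_\sigma d_{\disk^2}((0,0),\sigma\bcdot(a,-a)) = \poin(0,a)$, which is strictly larger for small $a$. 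So the $\geq$ inequality is false, and with it both the hyperbolicity and the completeness arguments you build on it.

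By contrast, the paper avoids this problem entirely: it works \emph{on} $\Sym^n(X)$ rather than trying to descend from $X^n$. It constructs a weak peak function for $X$ at each boundary point by solving a $\dbar$-problem on a Stein enlargement $X^*$ (Behnke--Stein), then builds a weak peak function for $\Sym^n(X)$ at a boundary orbit $\langle y_1,\dots,y_n\rangle$ by the symmetrized $n$-th-root trick $\langle x_1,\dots,x_n\rangle \mapsto {\sf h}^{-1}\bigl(\prod_j {\sf h}(f(x_j))^{1/n}\bigr)$, and applies the Carath\'eodory-completeness criterion (Result~\ref{R:s-Car_complt}, with $Z = \Sym^n(X^*)$). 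Since the Carath\'eodory pseudodistance is dominated by the Kobayashi pseudodistance, this yields Kobayashi completeness directly, with no lifting required. If you wish to salvage something in the spirit of your approach, note that the inequality $d_{\Sym^n(X)} \leq \min_\sigma d_{X^n}(\cdot,\sigma\bcdot)$\,---\,the direction you correctly prove\,---\,is of no use for completeness; what is needed is a lower bound on $d_{\Sym^n(X)}$, and peak functions (or more generally bounded holomorphic functions, i.e.\ the Carath\'eodory distance) are exactly the tool that supplies lower bounds without any lifting.
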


We must clarify that in this paper the term {\em connected bordered
Riemann surface with $\smoo^2$-smooth boundary} refers to a non-compact
Riemann surface $X$ obtained by excising from a compact Riemann $S$
a finite number of closed, pairwise disjoint disks $D_1,\dots, D_m$ such that
$\bdy{D_j}$, $j = 1,\dots, m$, are $\smoo^2$-smooth. The complex
structure on $X$ is the one it inherits from $S$: i.e., a holomorphic chart
of $X$ is of the form
$(\varphi, U\!\setminus\!(D_1\sqcup\dots \sqcup D_m))$,
where $(\psi, U)$ is a holomorphic chart of $S$ and $\varphi$
is the restriction of $\psi$ to $U\!\setminus\!(D_1\sqcup\dots \sqcup D_m)$.

The ingredients $(i)$ and $(ii)$ above allow us to analyse
proper holomorphic maps between a product manifold of dimension
$n$ and an $n$-fold symmetric product, where the factors of the
product manifold need not necessarily be the same. This is formalised
by our next theorem. A similar result
is proved in \cite{debraj2015function} where the factors of the
products involved are bounded planar domains.
Corollary~\ref{c:two_symmprods} is obtained as an easy
consequence of the following:  

\begin{theorem}\label{t:bord}
  Let $X=X_1\times \cdots \times X_n$ be
  a complex manifold where each $X_j$ is a connected non-compact Riemann surface
  obtained by excising a non-empty indiscrete set from a compact Riemann surface $R_j$.
  Let $Y$ be a connected
  bordered Riemann surface with $\smoo^2$-smooth boundary. Let
  $F: X \to \Sym^n(Y)$ be a 
  proper holomorphic map. Then, there exist proper holomorphic maps
  $F_j : X_j \to Y$, $j = 1,\dots, n$, such that
  \[
    F(x_1,\dots, x_n)\,=\,\symq\circ \big(F_1(x_1),\dots, F_n(x_n)\big)
    \; \; \; \forall (x_1,\dots, x_n)\in X.
  \]
\end{theorem}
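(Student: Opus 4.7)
The plan is to adapt the Remmert--Stein strategy, in the form developed by Edigarian and Zwonek in \cite{edigarian2005geometry}, to the bordered-Riemann-surface setting. Theorem~\ref{t:symm_taut} supplies the normal-family input (ingredient $(i)$ of the introduction) on the target side, so the argument can proceed in analogy to its planar ancestor, but executed coordinate-by-coordinate because the source is a genuine product rather than a symmetric power.

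\emph{Step 1 (Local lifts and slice-properness).} Let $\Delta\subset\Sym^n(Y)$ be the image under $\symq$ of the fat diagonal of $Y^n$; this is a proper analytic subvariety, and $Z:=F^{-1}(\Delta)$ is a proper analytic subset of $X$. Over each simply connected open $U\subset X\setminus Z$, the restriction $F|_U$ admits exactly $n!$ holomorphic lifts $\tilde F=(\tilde F_1,\ldots,\tilde F_n)\colon U\to Y^n$ with $\symq\circ\tilde F=F|_U$, any two differing by an $S_n$-permutation of the components. For each $j$ and each fixed choice of $x_k^0\in X_k$ ($k\ne j$), the slice map
\[
  \phi_j\colon X_j\longrightarrow\Sym^n(Y),\qquad w\longmapsto F(x_1^0,\ldots,w,\ldots,x_n^0),
\]
is proper: a sequence in $X_j$ without accumulation gives, upon insertion of the $x_k^0$'s, a sequence in $X$ without accumulation, on which $F$ must leave every compact of $\Sym^n(Y)$.

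\emph{Step 2 (Separation of variables in a local lift).} The core claim is that there exists a permutation $\tau\in S_n$ and holomorphic functions $G_i$ such that $\tilde F_i=G_i(x_{\tau(i)})$ on $U$. To establish it, fix indices $i,k$ and insert a sequence $a_m$ in $X_k$ with no accumulation point; then examine the holomorphic maps of the remaining $n-1$ variables obtained by setting $x_k=a_m$, both in $F$ and in the local lift $\tilde F$. Tautness of $\Sym^n(Y)$ (Theorem~\ref{t:symm_taut}) guarantees normal-family convergence on the target, while slice-properness from Step 1 tracks which components of the lift must escape every compact of $Y$ in the limit. Counting branches and applying the Remmert--Stein mean-value derivative-vanishing (ingredient $(ii)$), in the manner of \cite{edigarian2005geometry}, one concludes that for each $k$ there is precisely one index $i=i(k)$ with $\partial\tilde F_i/\partial x_k\not\equiv 0$ on $U$, that $k\mapsto i(k)$ is a bijection, and that the remaining partial derivatives of $\tilde F_{i(k)}$ all vanish. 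Setting $\tau$ to be the inverse of $k\mapsto i(k)$ yields the claim.

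\emph{Step 3 (Globalization and properness of the factors).} The permutation $\tau$ is locally constant on $X\setminus Z$, hence constant on that connected open set; the local factor-functions then patch into globally defined holomorphic maps $F_j\colon X_j\setminus Z_j\to Y$, where $Z_j$ is the $j$-th coordinate projection of $Z$. Since $Y$ is relatively compact in a compact Riemann surface, Riemann's extension theorem applied in charts extends each $F_j$ holomorphically across $Z_j$ to all of $X_j$. By construction and continuity, $F(x_1,\ldots,x_n)=\symq(F_1(x_1),\ldots,F_n(x_n))$ on $X$. Finally, properness of each $F_j$ follows from that of $F$: a sequence in $X_j$ escaping every compact, together with any fixed choice of points in the other factors, is an escaping sequence in $X$, so $F_j$ must leave every compact of $Y$, as required.

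The principal obstacle is Step~2: adapting the derivative-vanishing bookkeeping from the disk case of \cite{edigarian2005geometry} to the present setting, where $Y$ is a general hyperbolic bordered Riemann surface and the Poincar\'e metric must be replaced by the Kobayashi metric of $Y$. The product (not symmetric) structure of $X=X_1\times\cdots\times X_n$ is what prevents a direct reduction to Result~\ref{r:e-z} via coordinate charts, and forces the branch-counting to be carried out coordinate-by-coordinate. The tautness statement of Theorem~\ref{t:symm_taut} is precisely what keeps the normal-family extractions alive in this non-planar situation.
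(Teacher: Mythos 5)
Your three-step plan is broadly in the same spirit as the paper's proof, but as written it has a genuine gap at the crux (Step~2) and an error in Step~3.

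On Step~2: you acknowledge that the Remmert--Stein/Edigarian--Zwonek derivative-vanishing bookkeeping needs to be ``adapted,'' but the adaptation is precisely the hard part, and it is not a matter of replacing the Poincar\'e metric by the Kobayashi metric. The obstruction the paper identifies is that there is, in general, no single model coordinate chart $(\OM,\varPsi)$ on $\Sym^n(Y)$ containing $F(W_1\times\cdots\times W_n)$ while simultaneously retaining the indiscreteness of $\bdy X_j$ near each $\overline W_j$; so one cannot simply read off the Edigarian--Zwonek relations in a chart. The paper circumvents this by introducing a \emph{global} map $\Psi:\Sym^n(\Nb)\to\C^n$ built from a bounded nonconstant $\chi\in\hol(\Nb)$ on a neighbourhood $\Nb\supset\overline Y$, postcomposing $F$ by $\Psi$, and only then running the elementary-symmetric-polynomial and Cramer's-rule machinery. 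Equally crucial is Proposition~\ref{p:INDEP}, which shows that a holomorphic map into $\overline{\Sym^n(Y)}$ which hits $\langle y_1,\boldsymbol\ast\rangle$ for some $y_1\in\bdy Y$ at one point must have that form everywhere; this (built from a peak function at $y_1$) is what pins down the constant $C$ in the limiting relation \eqref{e:polybdy}, and nothing in your proposal supplies that rigidity. Without $\Psi$ and Proposition~\ref{p:INDEP}, the ``branch-counting'' you invoke has no mechanism behind it, so Step~2 remains unproven.

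On Step~3: the assertion that the local factor-functions patch into maps $F_j\colon X_j\setminus Z_j\to Y$, with $Z_j:=\pi_j(Z)$, cannot be right as stated, since $Z=F^{-1}(\Delta)$ is a hypersurface in the product and its projection to $X_j$ is generically all of $X_j$, so the proposed domain is empty. In the paper the lifts $\widetilde F_j$ are defined on $X\setminus\excep$ (an open subset of the \emph{product}) and depend only on the $j$-th coordinate in the sense of Definition~\ref{d:deps_only}; they are then extended across $\excep$ using the hyperbolic embedding of $\Sym^n(Y)$ in $\Sym^n(S)$ (Lemma~\ref{l:embed}) together with Kiernan's extension theorem (Result~\ref{r:kwack}) and Kodama's meromorphic-to-holomorphic result (Result~\ref{r:merhyp}). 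Your appeal to ``Riemann's extension theorem applied in charts'' does not work here: $Y$ is not a planar domain so there need not be a single bounded chart covering $\overline Y$, and even if one extended to a holomorphic map into $S$ one would still have to show the image lands in $Y$; that is exactly what the hyperbolicity input is for. Step~1 (slice-properness) and the locally-constant-permutation observation in Step~3 are fine and parallel the paper, but the two issues above leave the argument incomplete.
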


The complex structure on each of the factors $X_1,\dots, X_n$ has a
description analogous to the one given above for bordered Riemann surfaces.

Finally, we can state the corollary alluded to above. Observe that it is the
analogue, in a non-compact setting, of Fact~\ref{f:compact_isom}. It also
subsumes Result~\ref{r:e-z}: recall that the proper holomorphic self-maps of
$\disk$ are precisely the finite Blaschke products.   

\begin{corollary}\label{c:two_symmprods}
  Let $X$ be a connected non-compact Riemann surface obtained by
  excising a non-empty indiscrete set
  from a compact Riemann surface $R$, and let $Y$ be a connected
  bordered Riemann surface with $\smoo^2$-smooth boundary. Let
  $F: \Sym^n(X) \to \Sym^n(Y)$ be a 
  proper holomorphic map. Then, there exists a proper holomorphic map
  $\phi : X \to Y$ such that
  \[
    F\circ \symq^X(x_1,\dots, x_n)\,=\,\symq^Y\big(\phi(x_1),\dots, \phi(x_n)\big) \; \; \;
   \forall (x_1,\dots, x_n)\in X^n.
  \]
\end{corollary}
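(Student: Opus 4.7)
The plan is to lift $F$ through the symmetrization map and invoke Theorem~\ref{t:bord}. Specifically, $\symq^X : X^n \to \Sym^n(X)$ is a proper, surjective holomorphic map (the orbit map of a finite-group action), so $F \circ \symq^X : X^n \to \Sym^n(Y)$ is proper and holomorphic, and the assumptions on $X$ exactly match those of Theorem~\ref{t:bord} taken with $X_1 = \cdots = X_n = X$. Applying Theorem~\ref{t:bord} produces proper holomorphic maps $F_1, \ldots, F_n : X \to Y$ with
\[
F\big(\symq^X(x_1, \ldots, x_n)\big) \,=\, \symq^Y\big(F_1(x_1), \ldots, F_n(x_n)\big), \quad (x_1, \ldots, x_n) \in X^n.
\]
The corollary will follow once I show $F_1 = F_2 = \cdots = F_n$, whereupon $\phi := F_1$ is the desired proper holomorphic map.

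The main obstacle is upgrading the $S_n$-invariance of the left-hand side (in the $x_i$'s) into the pointwise equality of the $F_i$'s. My approach is to work on the subset $V \subset X^n$ on which $F_1(x_1), \ldots, F_n(x_n)$ are pairwise distinct. Each coincidence locus $\{F_i(x_i) = F_j(x_j)\}$ is a proper analytic hypersurface of $X^n$, since every $F_i$ is proper and hence non-constant; so $V$ is open, dense, and \emph{connected} (the complement of a codimension-one analytic subvariety of a connected complex manifold remains connected). This connectedness is the crux on which the whole argument rests. The $S_n$-invariance then implies that, for each $\sigma \in S_n$ and each $x \in V$, there is a unique permutation $\rho(\sigma, x) \in S_n$ with
\[
F_i\big(x_{\sigma(i)}\big) \,=\, F_{\rho(\sigma, x)(i)}\big(x_{\rho(\sigma, x)(i)}\big), \quad i = 1, \ldots, n;
\]
continuity makes $\rho(\sigma, \bcdot)$ locally constant on $V$, connectedness of $V$ promotes it to a single permutation $\rho(\sigma)$, and holomorphic continuation extends the resulting identity to all of $X^n$.

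To finish, I specialize to a transposition $\sigma = (j, k)$. For $i \notin \{j, k\}$ the identity reads $F_i(x_i) = F_{\rho(i)}(x_{\rho(i)})$; varying the coordinates of $x$ independently and invoking non-constancy of the $F_m$'s forces $\rho(i) = i$. The same non-constancy rules out $\rho(j) = j$ (else $F_j$ would be forced to be constant in the variable $x_k \neq x_j$), so $\rho$ must swap $j$ and $k$, and the identity at $i = j$ collapses to $F_j(x_k) = F_k(x_k)$ for every $x_k \in X$, i.e., $F_j = F_k$. Since the transposition was arbitrary, all $F_i$ coincide. Setting $\phi := F_1$ gives the asserted factorization; properness of $\phi$ is already furnished by Theorem~\ref{t:bord}.
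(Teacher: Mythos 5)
Your proposal is correct and follows the same overall plan as the paper: apply Theorem~\ref{t:bord} to the proper holomorphic map $F \circ \symq^X : X^n \to \Sym^n(Y)$ with $X_1 = \cdots = X_n = X$. However, the paper simply asserts that the corollary ``follows immediately'' from Theorem~\ref{t:bord}, glossing over the fact that the theorem only produces maps $F_1,\dots,F_n : X \to Y$ with no a priori reason to coincide. Your supplementary argument supplies precisely the missing step: using the $S_n$-invariance of $F\circ\symq^X$, the connectedness of the complement $V$ of the coincidence hypersurfaces $\{F_i(x_i) = F_j(x_j)\}$, the resulting globally constant matching permutation $\rho(\sigma)$, and the transposition analysis, you rigorously deduce $F_1 = \cdots = F_n$. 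This is a genuine (if routine) detail that the paper's one-sentence proof leaves to the reader; your treatment makes the corollary self-contained.
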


This corollary follows immediately from Theorem~\ref{t:bord} since 
$F\circ \symq^X : X^n\to \Sym^n(Y)$ is proper.

We conclude this section with an amusing observation that follows from
Corollary~\ref{c:two_symmprods}. We first make an explanatory
remark. It is well known that if $M_1$ and $M_2$ are two
non-compact complex manifolds of the same dimension and
$F: M_1\to M_2$ is a proper holomorphic map, then there exists a
positive integer $\mu$ such that, for any generic point $p\in M_2$,
$F^{-1}\{p\}$ has cardinality $\mu$. We call this number the {\em multiplicity}
of $F$, which we denote by ${\rm mult}(F)$.

\begin{corollary}
  Let $X$ and $Y$\,---\,a pair of connected non-compact Riemann
  surfaces\,---\,be exactly as in Corollary~\ref{c:two_symmprods}.
  If $F: \Sym^n(X) \to \Sym^n(Y)$ is a proper holomorphic map,
  then ${\rm mult}(F)$ is of the form $d^n$, where $d$ is some
  positive integer.
\end{corollary}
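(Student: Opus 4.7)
The plan is to compute ${\rm mult}(F)$ directly, by evaluating $|F^{-1}\{q\}|$ for one suitably generic point $q \in \Sym^n(Y)$ and exploiting the rigid form of $F$ provided by Corollary~\ref{c:two_symmprods}. That corollary furnishes a proper holomorphic map $\phi : X \to Y$ such that
\[
F \circ \symq^X(x_1,\ldots, x_n)\,=\,\symq^Y\big(\phi(x_1),\ldots,\phi(x_n)\big)
\qquad \forall (x_1,\ldots, x_n)\in X^n.
\]
Since $X$ and $Y$ are non-compact complex manifolds of the same (complex) dimension and $\phi$ is proper, ${\rm mult}(\phi)$ is a well-defined positive integer; set $d := {\rm mult}(\phi)$. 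The claim to prove is that ${\rm mult}(F) = d^n$.

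To perform the count, let $E \subset Y$ denote the (discrete) set of critical values of $\phi$, and choose pairwise distinct points $y_1,\ldots, y_n$ of $Y\setminus E$ with the additional property that $q := \symq^Y(y_1,\ldots,y_n)$ avoids the (proper analytic) branch locus of $F$ in $\Sym^n(Y)$. Both exceptional sets are negligible, so such $y_j$'s exist in abundance. Each fiber $\phi^{-1}\{y_i\}$ then has exactly $d$ points, and these $n$ fibers are pairwise disjoint since the $y_i$ are distinct.

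Now a point $\langle x_1,\ldots, x_n\rangle \in \Sym^n(X)$ lies in $F^{-1}\{q\}$ precisely when the multiset $\{\phi(x_1),\ldots,\phi(x_n)\}$ equals $\{y_1,\ldots,y_n\}$. The distinctness of the $y_i$'s forces such an orbit to arise by selecting exactly one preimage from each of the $n$ disjoint fibers $\phi^{-1}\{y_i\}$; distinct selections yield distinct orbits because the $\phi$-images of the chosen points separate them. Hence $|F^{-1}\{q\}| = d^n$, and consequently ${\rm mult}(F) = d^n$. There is no serious obstacle in this argument; the single mildly delicate point is verifying simultaneous genericity (avoiding the critical values of $\phi$ \emph{and} the branch locus of $F$), which is routine since both loci are proper analytic subsets of their ambient spaces.
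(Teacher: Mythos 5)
Your argument is correct, and since the paper states this corollary as an ``amusing observation that follows from Corollary~\ref{c:two_symmprods}'' without supplying a proof, your deduction is precisely the intended one. The key points are all in order: Corollary~\ref{c:two_symmprods} gives the factorization $F\circ\symq^X = \symq^Y\circ(\phi\times\cdots\times\phi)$; you set $d := {\rm mult}(\phi)$; and you count the fiber over a single well-chosen $q = \symq^Y(y_1,\ldots,y_n)$ with the $y_i$ pairwise distinct regular values of $\phi$. The bijection between $\prod_{i=1}^n \phi^{-1}\{y_i\}$ and $F^{-1}\{q\}$ is exactly as you describe: the distinctness of the $y_i$ kills any nontrivial permutation that could identify two selections. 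One small remark: you do not actually need to require that $q$ avoid the branch locus of $F$. Your computation shows $|F^{-1}\{q\}| = d^n$ for every $q$ in the set $\Omega$ of points $\symq^Y(y_1,\ldots,y_n)$ with the $y_i$ pairwise distinct and off the (discrete) critical-value set of $\phi$; since $\Omega$ is open and dense in $\Sym^n(Y)$, it must meet the complement of the branch locus, on which the fiber cardinality equals ${\rm mult}(F)$. This dispenses with the ``simultaneous genericity'' worry entirely, though your version is also fine.
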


\section{Preliminaries about the symmetric products}\label{s:symm_prelim}

In this section we shall give a brief description, given a 
Riemann surface $X$, of the complex structure on $\Sym^n(X)$, $n\geq 2$,
that makes it a complex manifold. We shall use the notation introduced in
Section~\ref{s:intro}. Given this notation:
\begin{itemize}
  \item Recall that $\langle x_1,\dots, x_n \rangle := \symq(x_1,\dots,x_n)$,
  \item  Given subsets $V_j\subseteq X$ that are open, let us write:
  \[
    \langle V_1,\dots, V_n \rangle := \left\{\langle x_1,\dots x_n\rangle : x_j \in V_j,
    \; j = 1, \dots, n\right\}.
  \]
\end{itemize}
Since $\Sym^n(X)$ is endowed with the quotient topology relative to $\symq$,
$\langle V_1,\dots, V_n\rangle$ is, by definition, open in $\Sym^n(X)$.
\smallskip

$\Sym^n(X)$ is endowed with a complex structure as follows.
Given a point $p\in \Sym^n(X)$, $p = \langle p_1,\dots p_n \rangle$,
choose a holomorphic chart $(U_j, \varphi_j)$ of $X$ at $p_j$,
$j = 1,\dots, n$, such that
\[
  U_j\cap U_k = \emptyset \ \ \text{if $p_j\neq p_k$} \qquad \text{and}
  \qquad U_j = U_k \ \ \text{if $p_j = p_k$}.
\]
The above choice of local charts ensures that the map 
$\varPsi_p: \langle U_1,\dots, U_n\rangle\to \C^n$ given by
\[
 \varPsi_p: \langle x_1,\dots, x_n\rangle\,\longmapsto\,\big(\elsym{1}(\varphi_1(x_1),\dots,
 		\varphi_n(x_n)),\dots, \elsym{n}(\varphi_1(x_1),\dots,\varphi_n(x_n))\big),
\]
(where $\sigma_1,\dots, \sigma_n$ are the elementary symmetric polynomials
that were introduced in Section~\ref{s:intro}) is a homeomorphism. This
follows from the Fundamental Theorem of Algebra. 
The collection of such charts
$(\langle U_1,\dots, U_n\rangle, \varPsi_p)$ produces a holomorphic
atlas on $\Sym^n(X)$. We shall call such a chart a {\em model coordinate
chart at $p\in \Sym^n(X)$}.

Finally, let $Z$ be a compact Riemann surface, $X\varsubsetneq Z$ be an embedded
open complex submanifold of $Z$, and let $\atlas(Z)$ denote the complex structure
on $Z$. Then, since\,---\,for any point
$p\in X$\,---\,there is a chart $(U, \varphi)\in \atlas(Z)$ such that $U\subset X$,
the above discussion shows that
$\Sym^n(X)$ is an embedded complex submanifold of $\Sym^n(Z)$. We refer the
reader to \cite{whitney1972varieties} for details.
\smallskip

\section{Hyperbolicity and its consequences}

The proof of Theorem~\ref{t:bord} will require several results about
holomorphic mappings into Kobayashi hyperbolic spaces. We summarize the
relevant results
in this section. An encyclopedic reference for the results in this is section
is \cite{kobayashi98hyp}.

In the theory of holomorphic functions of one variable, the behaviour of 
holomorphic functions near an isolated singularity is well-studied.
Among the important results in this area are the famous theorems of Picard.
A consequence of Picard's big theorem is that any
meromorphic mapping on $\disk \setminus \{0\}$ that misses three
points automatically extends to a meromorphic function defined on
the whole of $\disk$. One of the proofs of Picard's theorem
relies on the fact that the sphere with three points
removed is a hyperbolic Riemann surface. This perspective allows one
to generalize the aforementioned extension theorem to holomorphic
mappings into Kobayashi hyperbolic spaces. To this end, we need a definition.

\begin{definition}
  Let $Z$ be a complex manifold and let $Y$ be a relatively compact complex 
  submanifold of $Z$. We call a point $p \in \overline{Y}$ a 
  \textit{hyperbolic point} if every $Z$-open neighbourhood $U$ of $p$ contains a
  smaller neighbourhood $V$ of $p,\ \overline{V} \subset U$, such that
  \begin{equation}\label{e:hyp_imb}
    K_{Y}(\overline{V} \cap Y, Y\setminus U) := \inf\{K_Y(x,y) : x \in
    \overline{V} \cap Y, y \in Y \setminus U\} > 0,
  \end{equation}
  where $K_{Y}$ denotes the Kobayashi pseudo-distance on $Y$. We say that $Y$ is
  \emph{hyperbolically embedded} in $Z$ if every point of $\overline{Y}$ is a
  hyperbolic point.  
\end{definition}

The following result is an example of an extension result in higher dimensions
of the type alluded to above.


\begin{result}[Kiernan {\cite[Theorem~3]{kiernan72extension}}]
\label{r:kwack}
  Let $X$ be a complex space and let $\excep \subset X$ be a closed complex subspace.
  Let $Y$ be a complex manifold that is hyperbolically embedded in a complex
  manifold $Z$. Then every holomorphic map $f:X\setminus\excep \to Y$ extends
  to a meromorphic map $\widetilde{f}: X\to Z$.
\end{result}

This result will play a role in the final stages of proving
Theorem~\ref{t:bord}. To this end, we would also need\,---\,naturally, given the
statement of Theorem~\ref{t:bord}\,---\,conditions under which  
a meromorphic map between complex spaces is actually
holomorphic. One situation where this happens is when the complex spaces
are manifolds and the target space is Kobayashi hyperbolic. 

\begin{result}[Kodama \cite{kodama79bimero}]\label{r:merhyp}
  Let $f : X\to Y$ be a meromorphic map, where $X$ is a complex manifold and
  $Y$ is a Kobayashi hyperbolic manifold. Then $f$ is holomorphic.
\end{result}

The following lemma enables us\,---\,as we shall see in Section~\ref{s:bord}\,---\,to
use the preceding results in our specific set-up.

\begin{lemma}\label{l:embed}
  Let $Y$, a non-compact Riemann surface, be as in
  Theorem~\ref{t:bord} and let $S$ be the compact connected Riemann
  surface from which $Y$ is obtained by excising a finite number
  of closed disks. Then
  $\Sym^n(Y)$ is hyperbolically embedded in $\Sym^n(S)$.
\end{lemma}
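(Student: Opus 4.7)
The plan is to verify the Kiernan condition \eqref{e:hyp_imb} in three successive stages: first for $Y\hookrightarrow S$, next for $Y^{n}\hookrightarrow S^{n}$, and finally for $\Sym^{n}(Y)\hookrightarrow\Sym^{n}(S)$.

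\textbf{Stage 1.} For a point $q\in\overline{Y}\subset S$ two cases arise. If $q\in Y$, the $n=1$ case of Theorem~\ref{t:symm_taut} shows that $Y$ is complete hyperbolic, so that closed $K_{Y}$-balls are compact; given any $S$-neighbourhood $U$ of $q$, one picks $V\subset U$ with $\overline{V}\subset Y\cap U$, and then $\overline{V}\cap Y=\overline{V}$ is compact, $Y\setminus U$ is closed in $Y$, and these are disjoint, whence \eqref{e:hyp_imb} holds. If $q\in\bdy D_{j}$, the $\smoo^{2}$-smoothness produces a chart of $S$ at $q$ in which $Y$ locally looks like a smoothly bounded planar domain. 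Standard estimates on the Kobayashi metric of smoothly bounded domains (equivalently, the growth of the Poincaré metric on a bordered Riemann surface with $\smoo^2$-smooth boundary) then yield
\[
k_{Y}(z,v)\;\geq\;c\,|v|\big/\mathrm{dist}(z,\bdy Y)
\]
for $z$ in a small neighbourhood of $q$, which in turn gives \eqref{e:hyp_imb} at $q$.

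\textbf{Stage 2.} The embedding $Y^{n}\hookrightarrow S^{n}$ follows coordinate-wise. For $(q_{1},\dots,q_{n})\in\overline{Y^{n}}$ and a product neighbourhood $U=U_{1}\times\cdots\times U_{n}$, one chooses $V_{i}\subset U_{i}$ via Stage~1 when $q_{i}\in\bdy Y$, or with $\overline{V_{i}}\subset Y\cap U_{i}$ compact when $q_{i}\in Y$. Because $K_{Y^{n}}$ dominates the pullback of $K_{Y}$ by each coordinate projection, the product $V=V_{1}\times\cdots\times V_{n}$ witnesses \eqref{e:hyp_imb} at $(q_{1},\dots,q_{n})$ with the bound $\min_{i}\delta_{i}>0$.

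\textbf{Stage 3.} For the descent to symmetric products, I work through model coordinate charts. Fix $P=\langle q_{1},\dots,q_{n}\rangle\in\overline{\Sym^{n}(Y)}$ and organise the multiset $\{q_{i}\}$ into distinct values $p_{1},\dots,p_{k}\in\overline{Y}$ of multiplicities $n_{1},\dots,n_{k}$. Choosing pairwise disjoint $S$-neighbourhoods $\widetilde{U}_{j}$ of the $p_{j}$, a model chart (Section~\ref{s:symm_prelim}) identifies a neighbourhood of $P$ in $\Sym^{n}(S)$ biholomorphically with $\prod_{j=1}^{k}\Sym^{n_{j}}(\widetilde{U}_{j})$, and under this identification $\Sym^{n}(Y)$ corresponds to $\prod_{j}\Sym^{n_{j}}(\widetilde{U}_{j}\cap Y)$. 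A product argument analogous to Stage~2 reduces \eqref{e:hyp_imb} at $P$ to the same inequality at $\langle p_{j},\dots,p_{j}\rangle$ on each factor $\Sym^{n_{j}}(\widetilde{U}_{j}\cap Y)\hookrightarrow\Sym^{n_{j}}(\widetilde{U}_{j})$. Factors with $p_{j}\in Y$ are vacuous because the two spaces coincide near $P$. For factors with $p_{j}\in\bdy Y$, I view $\widetilde{U}_{j}$ via its chart as an open subset of $\C$ and $\Sym^{n_{j}}(\widetilde{U}_{j})$ as an open subset of $\C^{n_{j}}$; combining Stage~1 on $\widetilde{U}_{j}\cap Y$ with direct estimates on infinitesimal Kobayashi metrics carried through the branched covering $\widetilde{U}_{j}^{\,n_{j}}\to\Sym^{n_{j}}(\widetilde{U}_{j})$ produces the required lower bound.

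The main obstacle is the last step of Stage~3. The tempting shortcut — to use a quotient-distance identity of the form $K_{\Sym^{n}(Y)}(\pi(a),\pi(b))=\min_{\sigma\in S_{n}}K_{Y^{n}}(a,\sigma\cdot b)$ — is not available, since this identity genuinely fails at ramification points: already for the branched cover $\disk\to\disk$, $z\mapsto z^{2}$, the right-hand side strictly exceeds the left. One must therefore analyse the ramification locus of $\pi$ by hand, which is precisely what the local model-chart description in Stage~3 is designed to enable.
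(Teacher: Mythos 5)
Your approach diverges substantially from the paper's, and there is a genuine gap precisely at the step you flag as "the main obstacle."

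The difficulty in Stage~3 is a matter of direction of inequalities, not merely technical ramification analysis. After passing to a model chart you identify the relevant piece of $\Sym^n(Y)$ with $\prod_j\Sym^{n_j}(\widetilde{U}_j\cap Y)$ and try to deduce a lower bound for $K_{\Sym^n(Y)}$ from estimates on that local product. But the local product is a \emph{subdomain} of $\Sym^n(Y)$, so the distance-decreasing property of the inclusion gives
\[
K_{\prod_j\Sym^{n_j}(\widetilde{U}_j\cap Y)}\;\geq\;K_{\Sym^n(Y)}\big|_{\text{local product}},
\]
which is an upper bound for $K_{\Sym^n(Y)}$ rather than the required lower bound. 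The same directional problem infects the branched-covering step: since $\symq:\widetilde{U}_j^{\,n_j}\to\Sym^{n_j}(\widetilde{U}_j)$ is holomorphic, the only easy comparison is $K_{\Sym^{n_j}(\widetilde{U}_j\cap Y)}(\symq(a),\symq(b))\leq K_{(\widetilde{U}_j\cap Y)^{n_j}}(a,b)$ — again the wrong direction — and you correctly note that the quotient-distance identity that might rescue this fails at the ramification locus without offering a concrete substitute. Localizing to small charts always \emph{inflates} the Kobayashi metric; it cannot produce a lower bound for the metric of the ambient $\Sym^n(Y)$. (Stage~1's boundary estimate also quietly needs a localization theorem for the Kobayashi metric on Riemann surfaces, and the passage from infinitesimal blow-up to the integrated inequality \eqref{e:hyp_imb} is skipped; but these are secondary to the Stage~3 issue.)

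The paper sidesteps all of this by comparing with a strictly \emph{larger} domain, which makes the inequality go the useful way. One chooses a bordered Riemann surface $\mathcal{W}$ with $\smoo^2$-smooth boundary and $\overline{Y}\subset\mathcal{W}\subset S$ (shrink the excised disks slightly). Theorem~\ref{t:symm_taut} — applied with the \emph{full} $n$, not just $n=1$ — shows $\Sym^n(\mathcal{W})$ is Kobayashi complete, so $K_{\Sym^n(\mathcal{W})}$ is a genuine distance. The inclusion $\Sym^n(Y)\hookrightarrow\Sym^n(\mathcal{W})$ gives $K_{\Sym^n(Y)}\geq K_{\Sym^n(\mathcal{W})}\big|_{\Sym^n(Y)}$, and since $\overline{\Sym^n(Y)}$ is compact inside $\Sym^n(\mathcal{W})$, the infimum in \eqref{e:hyp_imb} becomes an infimum of a positive continuous distance over two disjoint compacts, hence positive. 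No local charts, no boundary asymptotics, no branched-cover analysis. This is exactly why the paper remarks, immediately after the lemma, that Theorem~\ref{t:symm_taut} is the essential ingredient — your sketch uses only its $n=1$ case, which is a signal that the key idea has been missed.
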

\begin{proof}
  Let $\mathcal{W} \subset S$ be another
  connected bordered Riemann surface with $\smoo^2$-smooth boundary
  such that
  $\overline{Y}\subset \mathcal{W}$. By Theorem~\ref{t:symm_taut}, $\Sym^n(Y)$ 
  and $\Sym^n(\mathcal{W})$ are both Kobayashi complete. In particular,
  $K_Y$ and $K_{\mathcal{W}}$ are {\bf distances}. It follows from the
  discussion at the end of Section~\ref{s:symm_prelim} that
  $\Sym^n(Y)$ and $\Sym^n(\mathcal{W})$ are embedded submanifolds
  of $\Sym^n(S)$. Observe that it suffices to show
  that each $p\in \bdy{\Sym^n(Y)}$ is holomorphically embedded in $S$.
  Fix a point $p\in \bdy{\Sym^n(Y)}$. Given any $S$-open neighbourhood $U$ of
  $p$, we choose a neighbourhood $V$ of $p$ such that $\overline{V}\subset U$ and
  $\overline{V}\subset \Sym^n(\mathcal{W})$. For any $x\in \overline{V}\cap Y$
  and $y\in Y\setminus U$, we have
  $K_Y(x, y)\,\geq\,K_{\mathcal{W}}(x,y) > 0$.
  We know that $\overline{V}\cap \overline{Y}$ and $\overline{Y\setminus U}$ are
  compact in $\mathcal{W}$. Thus, the inequality in \eqref{e:hyp_imb} follows from the
  last inequality. 
\end{proof}

As the proof of the above lemma shows, Theorem~\ref{t:symm_taut} is
an essential ingredient in the proof of Theorem~\ref{t:bord}. In the remainder
of this section, we shall present some prerequisites for proving
Theorem~\ref{t:symm_taut}. We begin with a couple of definitions.

\begin{definition}
Let $Z$ be a complex manifold and $Y\subset Z$ be a connected open subset of $Z$.
Let $p\in \bdy{Y}$. We say that {\em $p$ admits a weak peak function for $Y$}
if there exists a continuous function $f_p: \overline{Y}\to \cplx$ such that
$f|_Y$ is holomorphic,
\[
  f_p(p) = 1 \qquad \text{and} \qquad |f_p(y)| < 1 \; \; \forall y\in Y.
\]
We say that {\em $p$ admits a local weak peak function for $Y$} if $p$
admits a weak peak function for $Y\cap U_p$, where $U_p$ is some open neighbourhood
(in $Z$) of $p$.
\end{definition}

In what follows, given a complex manifold $X$, $C_X$ will denote the Carath{\'e}odory
pseudo-distance on $X$. The term {\em Carath{\'e}odory hyperbolic} has a
meaning analogous to that of the term Kobayashi hyperbolic. Furthermore, we say
that $X$ is {\em strongly $C_X$-complete} if $X$ is Carath{\'e}odory hyperbolic
and each closed ball in $X$, with respect to the distance $C_X$, is compact.

We shall also need the following:

\begin{result}\label{R:s-Car_complt}
Let $Z$ be a Stein manifold and $Y\subset Z$ a relatively compact connected open
subset of $Z$. If each point of $\bdy{Y}$ admits a weak peak function for $Y$,
then $Y$ is strongly $C_Y$-complete.
\end{result}

The above result has been established with $Z = \cplx^n$ and $Y$ a bounded domain
in $\cplx^n$ in \cite[Theorem~4.1.7]{kobayashi98hyp}. Its proof applies
{\em mutatis mutandis} for $Y$ and $Z$ as in Result~\ref{R:s-Car_complt} (that the class of
bounded holomorphic functions on $Y$ separates points is routine to show
with our assumptions on the pair $(Y, Z)$).
\smallskip

\section{The proof of Theorem~\ref{t:symm_taut}}

Before we provide a proof, some remarks on notation are in order. For simplicity of notation,
in this section ({\bf unlike} in subsequent sections), the symbol $D_j$, $j\in \N$, will denote
a closed topological disk. Given non-empty open subsets $A$ and $B$ of the Riemann
surface $S$ (explained below), we shall denote
the relation $\overline{A}\subset B$ (especially when there is a sequence of such relations)
as $A\Subset B$, where the closure is taken in $S$.
 
\begin{proof}[The proof of Theorem~\ref{t:symm_taut}]
  We begin with the following
  \smallskip
 
  \noindent{{\bf Claim.} {\em Each $y\in \bdy{X}$ admits a weak peak function for $X$.}}
  \vspace{0.5mm}
  \noindent{Each of the individual ingredients in this construction is classical, so we shall be brief.
  Fix $y\in \bdy{X}$. Let $S$ be the compact Riemann surface such that
  \[
    X = S\setminus \big(D_1\sqcup\dots \sqcup D_m\big),
  \]
  where each $D_j$ is a {\bf closed} topological disk with $\smoo^2$-smooth boundary. We may
  assume without loss of generality that $y\in \bdy{D_1}$. Let us write
  $X^* = S\setminus (\Delta_1\sqcup\dots \sqcup \Delta_m)$, where each $\Delta_j$,
  $j = 1,\dots, m$, is a closed topological disk such that
  \[
    \Delta_j  \varsubsetneq (D_j)^\circ, \; \; j = 2,\dots, m.
  \]
  Before we describe $\Delta_1$, let us choose a holomorphic chart $(U, \psi)$
  centered at $y$ such that:
  \begin{itemize}
    \item $\psi : (U, y) \longrightarrow (\disk, 0)$,
    \item $\psi^{-1}((0, 1]\,)\subset S\setminus \overline{X}$, and
    \item $U$ is {\bf so small} that
    $\psi^{-1}(\disk\cap \overline{D(1; 1)}\,)\cap \overline{X} = \{y\}$.
  \end{itemize}
  The last requirement is possible because $\bdy{X}$ is of class $\smoo^2$.
  It is easy to construct a {\em local} peak function $\phi$ at $y$ for $X$ that is, in
  fact, holomorphic on $U$ and such that
  \begin{align}
    |\phi(x)| &< 1 \; \; \forall x\in U\!\setminus\!\psi^{-1}(\disk\cap \overline{D(1; 1)}\,),
    \label{E:small} \\
    |\phi(x)| &> 1 \; \; \forall x\in \psi^{-1}(\disk\cap D(1; 1)), \notag \\
    |\phi(x)| &= 1 \; \; \forall x\in \psi^{-1}(\disk\cap \bdy{D(1; 1)}) \ \;\text{with}
    \; \ \phi^{-1}\{1\} = \{y\} \notag
  \end{align}
  (the interested reader is referred to the proof of Proposition~\ref{p:INDEP}
  for further details). Let $\Delta_1\varsubsetneq (D_1)^\circ$ and be such that
  $\Delta_1\cap U\neq \emptyset$ and
  $\bdy{\Delta_1}$ intersects $\bdy{U}$ at exactly two points. In fact, we can
  choose $\Delta_1$ such that, in addition to these properties, 
  $\bdy{\Delta_1}$ also intersects $\psi^{-1}(\{\zt\in \C: |\zt| = 1-\eps\})$ in exactly
  two points for some positive $\eps\ll 1$.
  Pick two $S$-open neighbourhoods, $V_1$ and $V_2$, of $y$ such that
  \[
    V_1\Subset V_2\Subset U \quad\text{and} 
    \quad \psi^{-1}((1-\eps)\disk\cap \overline{D(1; 1)}\,)\cap X^*\subset V_1.
  \]
  Let $\chi_1, \chi_2 \longrightarrow [0,1]$ be two functions in $\smoo^\infty(X^*)$ with
  \begin{align*}
    \left.\chi_1\right|_{V_1\cap X^*}\equiv 1 \quad\text{and} \quad
    \left.\chi_1\right|_{X^*\setminus V_2}\equiv 0, \\
    \left.\chi_2\right|_{V_2\cap X^*}\equiv 1 \quad\text{and} \quad
    \left.\chi_2\right|_{X^*\setminus U}\equiv 0.
  \end{align*}
  Finally, consider the function:
  \[
    G(x):=\begin{cases}
		   (1-\phi(x))\chi_2(x), &\text{if $x\in (X^*\cap U)$}, \\
		   0, &\text{if $x\in (X^*\setminus U)$}.
		\end{cases}
  \]}
  
  In what follows, it will be understood that any expression of the form $\Psi/G$
  is {\bf $\boldsymbol{0}$ by definition} outside ${\sf supp}(\Psi)$. Define the $(0,1)$-form
  $\omega\in \Gamma\big(T^{\boldsymbol{*}\,(0,1)}S|_{X^*}\big)$ as
  \[
    \omega = \frac{\dbar\chi_1}{G}
  \]
  By construction, $\omega$ is of class $\smoo^\infty$, vanishes on
  $(X^*\cap \overline{V}_1)\cup (X^*\!\setminus V_2)$, and
  \begin{equation}\label{E:closed}
    x\in X\cap (V_2\setminus \overline{V}_1)\,\Longrightarrow\,\omega(x)
    = \left.\dbar\left(\frac{\chi_1}{1-\phi}\right)\right|_x.
  \end{equation}
  By the Behnke--Stein theorem \cite{behnkeStein:eaFRF48}, $X^*$ is Stein. Thus,
  it admits a solution to the $\dbar$-problem
  \begin{equation}\label{E:d-bar}
    \dbar u\,=\,\omega \; \ \text{on $X^*$.}
  \end{equation}
  Furthermore, it is a classical fact that there exists a solution, say $\smu$, to
  \eqref{E:d-bar} of class $\smoo^\infty(X^*)$.
  Write $u_y := \left.\smu\right|_{\overline{X}}$.
  As $X\Subset X^*$, $u_y$ is bounded. Thus\,---\,by subtracting a large
  positive constant if necessary\,---\,we may assume that $\re(u_y) < 0$ on 
  $\overline{X}$. Observe that, by \eqref{E:closed}, \eqref{E:d-bar} and the
  construction of $G$,
  \[
    \big(-\!(\chi_1/G) + u_y\big)^{-1}\in \hol(X).
  \]
  By \eqref{E:small} and by our adjustment of $\re(u_y)$, we have
  \[
    \re\big( (-(\chi_1/G) + u_y )^{-1} \big)(x) < 0 \; \; \forall x\in X.
  \]
  From this, it is easy to check that $f_y(x) := e^{(\,1/(-(\chi_1/G) + u_y))(x)}$,
  $x\in \overline{X}$, is a weak peak function at $y$ for $X$. Hence our claim.
  \smallskip

  In this paragraph, we assume that $n\geq 2$.
  Let us pick a point $\langle y_1,\dots, y_n\rangle\in \bdy\Sym^n(X)$.
  It is routine to see that $\symq$ is a proper map. Thus, we may assume
  without loss of generality that $y_1\in \bdy{X}$. Our Claim above gives
  us a weak peak function for $X$ at $y_1$: call it $f$. Set
  \[
    {\sf h}(z)\,:=\,\frac{1+z}{1-z},
  \]
  which maps $\disk$ biholomorphically to the open right half-plane $\hpln_+$ and
  maps $(0, 1)\longmapsto (1, +\infty)$. Let $(\bcdot)^{1/n}$ denote
  the holomorphic branch on $\hpln_+$ of the $n$-th root such that
  \begin{equation}\label{E:branch}
    z^{1/n} \in \big\{w\in \cplx : \re(w) > 0, \ |\im(w)| < \arctan(\pi/2n)\re(w)\big\}
    \; \; \forall z\in \hpln_+\,.
  \end{equation}
  Furthermore, note that
  \begin{itemize}
    \item[$(*)$] $(\bcdot)^{1/n}$ extends to $\bdy{\hpln_+}$ as a
    continuous function 
    such that $\lim_{\overline{\hpln}_+\ni z\,\to\,\infty}\,z^{1/n} = \infty$.
  \end{itemize}
  
  If $n = 1$ then set $F := f$. If $n\geq 2$, then define
  \[
    F(\langle x_1,\dots, x_n\rangle)\,:=\,{\sf h}^{-1}\!\left(
    \prod\nolimits_{1\leq j\leq n}\Big(\frac{1 + f(x_j)}{1 - f(x_j)}\Big)^{1/n}\right) \; \;
    \forall \langle x_1,\dots, x_n\rangle\in \overline{\Sym^n(X)}.
  \]
  By \eqref{E:branch} we see that $F\in \smoo\big(\overline{\Sym^n(X)}\big)\cap
  \hol(\Sym^n(X))$. By $(*)$ and the properties of $f$ it follows that $F$ is
  a weak peak function for $\Sym^n(X)$ at
  $\langle y_1,\dots, y_n\rangle\in \bdy\Sym^n(X)$.
  
  We have just shown that, whether $n = 1$ or $n\geq 2$, each point in
  $\bdy\Sym^n(X)$ admits a weak peak function for $\Sym^n(X)$. Recall that
  $X^*$ is Stein. It follows from
  Result~\ref{R:s-Car_complt}, by taking $Z = \Sym^n(X^*)$, that
  $\Sym^n(X)$ is strongly Carath{\'e}odory complete.
  In particular, $\Sym^n(X)$ is Kobayashi complete.
  
  By a result of Kiernan \cite{kiernan:rbtthm70}, it follows that
  $\Sym^n(X)$ is taut. 
\end{proof}
\smallskip

\section{Technical propositions}\label{s:technical}

In proving Theorem~\ref{t:bord}, we will need to understand the behaviour of
holomorphic maps $f : Z\to\overline{\Sym^n(Y)}$, $n\geq 2$\,---\,where $Z$ is
connected and $Y$ is as in Theorem~\ref{t:bord}\,---\,in the event that 
${\sf range}(f)\not\subset \Sym^n(Y)$. 

To this end, we shall use the notation introduced in Sections~\ref{s:intro}
and~\ref{s:symm_prelim}.
Thus, given a Riemann surface $Y$ and $(y_1,\dots, y_n)\in Y^n$, $\symq$
is as introduced in Section~\ref{s:intro}, and
\[
  \langle  y_1,\dots, y_n\rangle\,:=\,\symq(y_1,\dots, y_n).
\]
For a point $y\in Y$, a presentation of $y$ having the form of the left-hand side
of the above equation will be called the {\em quotient representation of $y$}.

We require one further observation. For a Riemann surface $Y$, let
$D_1,\dots, D_n$ be non-empty subsets of
$Y$ such that $D_1\times\dots\times D_n$
is not necessarily closed under the $S_n-$action on $Y^n$, $n\geq2$. In
any circumstance, we shall use $\symq(D_1\times\dots\times D_n)$ 
to denote the image of the set $D_1\times\dots\times D_n$ under the
map $\symq : Y^n\to \Sym^n(Y)$.

We begin with the following simple lemma:

\begin{lemma}\label{l:hol}
  Let $X$ be a Riemann surface, $n\geq 2$, and let $D_1,\dots,D_n \subset X$ 
  be open subsets. Write $\dee := \bigcup_{j=1}^n D_j$.
  Define $H :=
  \symq(D_1\times\dots\times D_n)$. Suppose $\phi : \dee\to \C$ is a 
  bounded holomorphic map and $\mathscr{S}$ a symmetric polynomial
  in $n$ indeterminates. Then, the relation
  $\Gamma \subset H\times\C$ defined by
  \begin{multline*}
    \Gamma\,:=\,\big\{\big(\langle  v_1,\dots, v_n\rangle, w\big)\in H\times\C : 
    w = \mathscr{S}(\phi(x_1),\dots , \phi(x_n)) \; \text{and} \\
    (x_1,\dots, x_n)\in (\symq^{-1}\{\langle  v_1,\dots, v_n\rangle\}
    \cap D_1\times\dots\times D_n)\big\}.
  \end{multline*}
  is the graph of a holomorphic function defined on $H$.
\end{lemma}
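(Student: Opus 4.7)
The plan is to first identify $\Gamma$ as the graph of an honest function $F : H \to \C$, and then verify that $F$ is holomorphic by recognizing it as the restriction of a function obtained by descent from $\dee^n$.

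First I would define $F : H \to \C$ by choosing, for each $p \in H$, a representative $(v_1, \dots, v_n) \in (D_1 \times \cdots \times D_n) \cap \symq^{-1}\{p\}$ and setting $F(p) := \mathscr{S}(\phi(v_1), \dots, \phi(v_n))$. Well-definedness is immediate: any two such representatives of $p$ differ by a permutation $\tau \in S_n$, and since $\mathscr{S}$ is symmetric, the value $\mathscr{S}(\phi(v_1), \dots, \phi(v_n))$ does not depend on the choice. It follows that $\Gamma$ is precisely the graph of $F$.

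To establish that $F$ is holomorphic, consider $\tilde{\Psi} : \dee^n \to \C$ defined by $\tilde{\Psi}(x_1, \dots, x_n) := \mathscr{S}(\phi(x_1), \dots, \phi(x_n))$. Since $\phi$ is holomorphic on $\dee$ and $\mathscr{S}$ is a polynomial, $\tilde{\Psi}$ is holomorphic on $\dee^n$; and since $\mathscr{S}$ is symmetric, $\tilde{\Psi}$ is $S_n$-invariant. By the standard descent principle for symmetric holomorphic functions, there exists a holomorphic $\hat{F} : \Sym^n(\dee) \to \C$ with $\hat{F} \circ \symq = \tilde{\Psi}$. Since $D_1, \dots, D_n \subset \dee$, we have $H \subset \Sym^n(\dee)$ (the latter being an open submanifold of $\Sym^n(X)$), and by construction $F = \hat{F}|_H$; hence $F$ is holomorphic.

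The only step requiring justification is the descent of $\tilde{\Psi}$ to a holomorphic function on $\Sym^n(\dee)$, particularly at points $p = \langle v_1, \dots, v_n \rangle$ where some of the $v_j$ coincide. This is verified locally using the model coordinate charts of Section~\ref{s:symm_prelim}: on such a chart, the target coordinates are the elementary symmetric polynomials of local coordinates on the factors, and the pullback of $\tilde{\Psi}$ is a symmetric holomorphic function in those local coordinates. By the fundamental theorem on symmetric polynomials (applied termwise to the power-series expansion of $\tilde{\Psi}$), this symmetric holomorphic function is a holomorphic function of the elementary symmetric polynomials, which is exactly the statement that $F$ is holomorphic in the chart coordinates.
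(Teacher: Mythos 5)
Your outline is sound and genuinely differs from the paper's route: you invoke the descent principle for $S_n$-invariant holomorphic functions on $\dee^n$, whereas the paper argues directly that $\Phi$ is holomorphic off the critical values of $\symq$ (using local holomorphic inverses) and then applies Riemann's removable-singularity theorem, which is where the boundedness hypothesis on $\phi$ is used. Your approach, if carried out carefully, would in fact dispense with the boundedness hypothesis altogether.

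However, the local verification of the descent in your last paragraph has a gap. At a point $p = \langle p_1,\dots,p_n\rangle$ where the $p_j$'s are \emph{not} all equal, the model chart uses charts $(U_j,\varphi_j)$ with $\varphi_j = \varphi_k$ only when $p_j = p_k$; the $\varphi_j$ for distinct $p_j$'s are different charts on disjoint $U_j$'s. The pullback of $\tilde\Psi$ in the local coordinates $\zeta_j = \varphi_j(x_j)$ is $\mathscr{S}\big(\phi\circ\varphi_1^{-1}(\zeta_1),\dots,\phi\circ\varphi_n^{-1}(\zeta_n)\big)$, which is symmetric in $(\zeta_1,\dots,\zeta_n)$ \textbf{only} if all the $\varphi_j$ coincide. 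In general it is invariant only under the stabilizer of $p$, i.e., the product $S_{k_1}\times\cdots\times S_{k_l}$ permuting indices within coincidence blocks. The fundamental theorem on symmetric polynomials, applied termwise to a power series, handles the fully-coincident case, but does not by itself give holomorphy in the chart coordinates $\sigma_1,\dots,\sigma_n$ when the $p_j$'s only partially coincide. To close the gap you would need either the block decomposition (express $g$ as a holomorphic function of the group-wise elementary symmetric polynomials, and check that these are biholomorphically related to $\sigma_1,\dots,\sigma_n$, which uses the disjointness of $\varphi_j(U_j)$ across blocks), or fall back on the argument the paper uses: holomorphy off the critical locus via local sections of $\symq$, plus Riemann's extension theorem (for which continuity of $\Phi$, or the boundedness of $\phi$ as in the paper, suffices).
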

\begin{proof}
  Let $\pi_1$ (resp., $\pi_2$) denote the projection onto the first (resp., second)
  factor of $H\times \C$.
  Consider any point $\langle  v_1,\dots, v_n\rangle\in H$. That
  $\pi_1^{-1}\{\langle  v_1,\dots, v_n\rangle\}\cap \Gamma$ is a singleton follows
  clearly from the fact that $\mathscr{S}$ is a symmetric polynomial. It is thus
  the graph of a function $\Phi$. Consider the mapping
  $\Phi^\prime: D_1\times\dots\times D_n \to \C$ given by
  \[
    (x_1,\dots,x_n) \mapsto \mathscr{S}(\phi(x_1),\dots,\phi(x_n)).
  \]
  Let us write $\Delta := D_1\times\dots\times D_n$. By construction, 
  $\Phi^\prime = \Phi \circ \big(\!\left.\symq\right|_{\Delta}\big)$. Since
  $\symq$ admits holomorphic branches of local inverses around any of its
  regular values, the above construction shows that
  $\Phi$ is holomorphic outside the set of critical values of
  $\symq$ in $H$. Since $\phi$, and thus $\Phi^\prime$, are bounded,
  Riemann's removable singularities theorem implies that $\Phi$
  is holomorphic on $H$.
\end{proof}

The key result needed is the following, which
generalizes Lemma~5 of \cite{edigarian2005geometry}.

\begin{proposition}\label{p:INDEP}
  Let $Y$ be a connected bordered Riemann surface with $\smoo^2$-smooth
  boundary and let $S$ be the compact Riemann
  surface from which $Y$ is obtained by excising a finite number of
  closed disks. Let $Z$ be a connected complex manifold and
  let $f: Z \to \Sym^n(Y)$, $n\geq 2$, be a holomorphic
  map such that $f(Z) \subset \overline{\Sym^n(Y)}$ (where the closure is in
  $\Sym^n(S)$). Suppose there exists a $z_0\in Z$ such that  
  $f(z_0)$ is of the form $\langle y_1, \boldsymbol{\ast} \rangle$, $y_1\in \bdy Y$.
  Then
  \[
    f(z)\;\;\text{is of the form $\langle y_1, \boldsymbol{\ast} \rangle$  for
    all}\;\;z \in Z.
  \]  
  Moreover, if $y_1$ appears
  $k$ times, $1\leq k\leq n$, in the quotient representation of $f(z_0)$ then the same is
  true for $f(z)$ for all $z\in Z$.
\end{proposition}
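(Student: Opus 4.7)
The plan is to prove a local statement near each point of $Z$ and then globalize by a clopen argument based on connectedness. For $j \in \{1, \ldots, n\}$, define
\[
A_j := \{z \in Z : y_1 \text{ appears at least } j \text{ times in the quotient representation of } f(z)\}.
\]
Continuity of $f$ together with upper semicontinuity of point-multiplicity on $\Sym^n(S)$ (in the quotient topology) make each $A_j$ closed. The local statement I would prove is: at any $z^\ast \in Z$ where $y_1$ appears in $f(z^\ast)$ with multiplicity exactly $k^\ast \geq 1$, the multiplicity remains $\geq k^\ast$ on an open neighborhood of $z^\ast$. Granting this, each $A_j$ is also open at every point of itself, hence clopen, hence (by connectedness of $Z$) either $\emptyset$ or $Z$. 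Since $z_0 \in A_k \setminus A_{k+1}$, this forces $A_k = Z$ and $A_{k+1} = \emptyset$, i.e.~$y_1$ appears with multiplicity exactly $k$ in $f(z)$ for every $z \in Z$.

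For the local statement I would work in a model coordinate chart at $p^\ast := f(z^\ast)$ as in Section~\ref{s:symm_prelim}, using a single chart $(U_1, \varphi_1)$ of $S$ at $y_1$ (reused for its $k^\ast$ occurrences) together with disjoint charts $(U_j, \varphi_j)$ at the remaining distinct entries of $f(z^\ast)$. Exploiting the $\smoo^2$-smoothness of $\bdy Y$, a local conformal change lets one arrange $\varphi_1(y_1) = 0$, $\varphi_1(\overline{Y} \cap U_1) \subset \overline{\hpln_+}$, and $\varphi_1(\bdy Y \cap U_1) \subset i\R$. In this chart, $\varPsi_{p^\ast} \circ f$ is a holomorphic map into $\C^n$; repackaging the components as the elementary symmetric functions of the entries produces the ``characteristic polynomial''
\[
P_z(T) := T^n - c_1(z) T^{n-1} + \cdots + (-1)^n c_n(z),
\]
whose roots in $\C$ are the $\varphi$-images of the entries of $f(z)$. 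For $z$ close to $z^\ast$, the $k^\ast$ inner roots (those in $\varphi_1(U_1)$) remain clustered near $0$ and separated from the remaining $n - k^\ast$ roots, so Weierstrass preparation (or contour integration of $P_z'/P_z$ along a small circle about $0$, combined with Newton's identities) yields a holomorphic factorization $P_z = Q_z R_z$ with
\[
Q_z(T) = T^{k^\ast} - s_1(z) T^{k^\ast - 1} + \cdots + (-1)^{k^\ast} s_{k^\ast}(z),
\]
where the $s_j$ are holomorphic near $z^\ast$ and $s_j(z^\ast) = 0$.

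The heart of the argument is then a maximum-principle cascade on the coefficients $s_j$. Let $\alpha_1(z), \ldots, \alpha_{k^\ast}(z)$ denote the roots of $Q_z$ as an unordered multi-set; they lie in $\overline{\hpln_+}$. From $\re s_1(z) = \sum_i \re \alpha_i(z) \geq 0$ with equality at $z^\ast$, the minimum principle for the harmonic function $\re s_1$ forces $\re s_1 \equiv 0$ on a connected neighborhood of $z^\ast$, whence the holomorphic function $s_1$ is a pure-imaginary constant equal to $s_1(z^\ast) = 0$. Consequently $\sum_i \re \alpha_i(z) = 0$ with each summand $\geq 0$, so every $\alpha_i(z) \in i\R$. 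Writing $\alpha_i = i t_i$ with $t_i \in \R$, one has $s_j(z) = i^j \sigma_j(t_1(z), \ldots, t_{k^\ast}(z))$ (as a function of the well-defined multi-set $\{t_i(z)\}$), which is real-valued when $j$ is even and purely imaginary when $j$ is odd; being holomorphic and taking values in $\R$ or in $i\R$, each $s_j$ is constant, and equals $s_j(z^\ast) = 0$. Hence $Q_z \equiv T^{k^\ast}$ throughout the neighborhood, so all $k^\ast$ inner roots coincide with $\varphi_1(y_1) = 0$, i.e.~$y_1$ appears with multiplicity $\geq k^\ast$ in $f(z)$ throughout a neighborhood of $z^\ast$. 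This is the local claim.

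The main obstacle I anticipate lies in the first step of the local construction, namely straightening $\varphi_1$ so that $\varphi_1(\overline{Y} \cap U_1) \subset \overline{\hpln_+}$. This rests on the $\smoo^2$ smoothness of $\bdy Y$ via a local conformal change (Riemann mapping together with a Schwarz-type reflection across the $C^2$ arc) and requires a little care to state cleanly; once it is in place, the cluster-splitting and the symmetric-function cascade above are routine.
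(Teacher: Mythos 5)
The globalization via the clopen sets $A_j$ is sound and is, in substance, the same argument the paper makes (the paper's set $E$ plays the role of $A_k$, with closedness checked by hand). The local step, however, contains a genuine gap, which you partly flag yourself. You want a holomorphic chart $(U_1,\varphi_1)$ of $S$ centered at $y_1$ with $\varphi_1(\bdy Y\cap U_1)\subset i\R$. A holomorphic chart sends real-analytic arcs to real-analytic arcs, so its image of $\bdy Y\cap U_1$ can lie in a straight line only if $\bdy Y$ is locally real-analytic; mere $\smoo^2$-regularity does not permit this, and the remedy you propose (Schwarz-type reflection across the arc) likewise needs real-analyticity. Kellogg--Warschawski boundary regularity does not help either: it gives a map on a one-sided neighbourhood with $\smoo^{1,\alpha}$ boundary extension, not a biholomorphism on an $S$-open neighbourhood of $y_1$.

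The irony is that the hypothesis you actually \emph{can} secure is weaker than what you asked for, and it makes the rest of your argument simpler, not harder. What $\smoo^2$-smoothness buys is an exterior tangent disk at $y_1$: one can choose the chart so that $\varphi_1(y_1)=0$, $\varphi_1(Y\cap U_1)\subset\{\zt\in\disk:|\zt-1|>1\}$, and $\varphi_1(\bdy Y\cap U_1)$ meets the circle $|\zt-1|=1$ only at $0$ (this is precisely what the paper does). Post-composing with the M\"obius map $\zt\mapsto -\zt/(2-\zt)$ converts this to $\varphi_1(\overline{Y}\cap U_1)\subset\overline{\hpln}_+$ with $\varphi_1(\overline{Y}\cap U_1)\cap i\R=\{0\}$. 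Feeding \emph{that} into your argument, the first step of the cascade already finishes: $\re s_1\geq 0$ with $\re s_1(z^*)=0$ gives $\re s_1\equiv 0$, hence every inner root $\alpha_i(z)$ lies in $i\R\cap\varphi_1(\overline{Y}\cap U_1)=\{0\}$. The parity argument on $s_2,\dots,s_{k^*}$ (which was needed to rule out nonzero purely imaginary roots, i.e.\ was a consequence of over-straightening the boundary) becomes unnecessary. After this repair your proof is essentially the paper's, with a harmonic peak function ($-\re\varphi_1$) and the minimum principle replacing the paper's holomorphic peak function $\phi=\exp\!\big(\psi_1/(2-\psi_1)\big)$ and the maximum-modulus principle; the paper also sidesteps the characteristic-polynomial/Weierstrass-preparation bookkeeping by descending the sum of peak values directly to a holomorphic function on a chart of $\Sym^n(S)$ via Lemma~\ref{l:hol}.
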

\begin{proof}
  Let $y_2,\dots, y_l$ be the other \emph{distinct} points that
  appear in the quotient representation of $f(z_0)$. Let $D_1,\dots,D_l$ 
  be small coordinate disks in $S$ centered at 
  $y_1,\dots,y_l$, respectively, whose closures are pairwise disjoint.
  Let $(D_j, \psi_j)$, $j = 1,\dots, l$ , denote the coordinate charts. By ``coordinate
  disks centered at $y_j$'', we mean that $\psi_j(D_j) = \disk$ and
  $\psi_j(y_j) = 0$, $j = 1,\dots, l$. Furthermore,
  as $Y$ has $\smoo^2$-smooth boundary, we can (by shrinking $D_1$ and scaling
  $\psi_1$ if necessary) ensure that
  \begin{itemize}
    \item $\psi_1(\bdy Y\cap D_1)\cap \{\zt\in \C : |\re(\zt) - 1|^2
    + |\im(\zt)|^2 = 1\}\,=\,\{\psi_1(y_1)\}\,=\,\{0\}$; and
    \item $\psi_1(Y\cap D_1)\subset \{\zt\in \disk : |\re(\zt) - 1|^2
    + |\im(\zt)|^2  > 1\}$.
  \end{itemize}
  Let us define $\phi\in \hol(D_1)$ by
  \[
    \phi(y)\,:=\,\exp\left\{\frac{\psi_1(y)}{2 - \psi_1(y)}\right\} \; \; \; \forall y\in D_1.
  \]
  Using the fact that the M{\"o}bius transformation $\zt\mapsto \zt/(2 - \zt)$ maps
  the circle $\{\zt\in \C : |\re(\zt) - 1|^2 + |\im(\zt)|^2  = 1\}$ onto $\{\zt\in \C : \re(\zt) = 0\}$,
  it is routine to verify that
  \begin{equation}\label{E:peak}
    \phi(y_1) =1 \quad\text{and} \quad |\phi(y)| < 1 \; \; \; 
    \forall y \in D_1 \cap (\overline{Y}\!\setminus\!\{y_1\}),
  \end{equation}
  and that $\phi$ is a bounded function.
  
  Write $\dee := \sqcup_{j=1}^l D_j$ and define a function $\widetilde{\phi}\in \hol(\dee)$
  as follows
  \[
    \widetilde{\phi}(y)\,:=\,\begin{cases}
    					\phi(y), & \text{if $y\in D_1$}, \\
					0, & \text{otherwise}.
				     \end{cases}
  \]
  Let $H := \symq(D_1^k \times D_2^{k_2} \times \dots \times D_l^{k_l})$,
  where $k_j$ is the number of times $y_j$ appears in the quotient
  representation of $f(z_0)$, $j = 2,\dots, l$. Now consider the following relation
  $\Gamma \subset H\times\C$ defined by
  \begin{multline}
    \Gamma\,:=\,\big\{\big(\langle  v_1,\dots, v_n\rangle, w\big)\in H\times\C : 
    w = \widetilde{\phi}(x_1) + \dots + \widetilde{\phi}(x_n) \\
    \text{and} \; (x_1,\dots, x_n)\in (\symq^{-1}\{\langle  v_1,\dots, v_n\rangle\}
    \cap D_1^k \times D_2^{k_2} \times \dots \times D_l^{k_l})\big\}.
  \end{multline}   
  It follows from Lemma~\ref{l:hol} that $\Gamma$ is the graph
  of a  function, say $\Phi$, that is holomorphic on $H$.
  Now as $f(z_0) \in H$ and $H$ is an open neighborhood of $f(z_0)$,
  we can find a small connected open set $U\varsubsetneq Z$ around $z_0$
  such that $f(U) \subset H$. Consider the holomorphic map
  $\Phi\circ \big(\left.f\right|_U\big)$. As $f(U)\subset \overline{\Sym^n(Y)}$, 
  we have, by construction:
  \[
   \Phi\circ \big(\left.f\right|_U\big)(z_0)\,=\,k\,=\,\sup\nolimits_{z\in U}
   \big|\Phi\circ \big(\left.f\right|_U\big)(z)\big|.
  \]
  By the maximum modulus theorem,  $\Phi\circ \big(\left.f\right|_U\big)\equiv k$.
  By the definition of the function $\Phi$, we deduce that the conclusion of our
  proposition holds true on the open set $U$.
  
  Let $E$ be the set of points of $Z$ for which the conclusion of the proposition
  holds true. By hypothesis, $E$ is non-empty. The above argument shows
  that $E$ is an open set. Let $z \in Z\setminus E$. If $y_1$ does not 
  appear in the quotient representation $f(z)$ at all, then, by continuity, there
  exists a neighbourhood $U_z$ of $z$ 
  such that the same is true for every point in $U_z$. On the other hand, if
  $y_1$ does appear in the quotient representation of $f(z)$ but not
  $k$ times, then the argument given prior to this paragraph 
  shows that we can find a neighbourhood $U_z$ of $z$ such that the same is true
  for every point in $U_z$. In either case, therefore, $U_z \subset (Z \setminus E)$.
  This shows that $E$ is closed. Therefore $E = Z$.
\end{proof}
\smallskip

\section{The proof of Theorem~\ref{t:bord}}\label{s:bord}

In proving Theorem~\ref{t:bord} we will find it convenient to use a certain expression,
which we now define.

\begin{definition}\label{d:deps_only}
  Let $M_1,\dots, M_n$ and $N$ be complex manifolds, and $\mathfrak{V}$ a proper (possibly empty)
  analytic subvariety of  $M_1\times\dots\times M_n$. Let 
  $f : (M_1\times\dots\times M_n)\setminus \mathfrak{V}\to N$ be a holomorphic map. We say
  that $f$ {\em depends only on the $j$-th coordinate on
  $(M_1\times\dots\times M_n)\setminus \mathfrak{V}$}, $1\leq j\leq n$,
  if for each $x\in M_j$ lying outside
  some proper (possibly empty) analytic subvariety of $M_j$,
  \begin{multline}
    f(x_1,\dots,x_{j-1}, x, x_{j},\dots, x_{n-1}) = f(y_1,\dots,y_{j-1}, x, y_{j},\dots, y_{n-1}) \\
    \text{for all $(x_1,\dots,x_{n-1})\neq (y_1,\dots,y_{n-1})\in \prod_{i\neq  j}M_i$}
   \end{multline}
   such that $(x_1,\dots,x_{j-1}, x, x_{j},\dots, x_{n-1}), 
   (y_1,\dots,y_{j-1}, x, y_{j},\dots, y_{n-1})\notin \mathfrak{V}$.
\end{definition}

Before we give the proof of Theorem~\ref{t:bord}, we ought to point out to the reader a convention
that will be used below. Given a product space, $\pi_j$ will denote the projection onto the $j$-th coordinate.
If several product spaces occur in a discussion, we {\bf shall not add} additional labels
to $\pi_j$ to indicate the domain of this projection unless there is scope for ambiguity.

\begin{proof}[The proof of Theorem~\ref{t:bord}]
  Let $S$ be a compact Riemann surface such that $Y$ is obtained from
  $S$ by excising a finite number of closed disks such that $\bdy Y$ is
  $\smoo^2$-smooth. Theorem~\ref{t:bord} is a tautology when $n = 1$,
  so it will be understood here that $n\geq 2$.  
  Let $R_j$, $j = 1,\dots,n$, be as in the
  statement of Theorem~\ref{t:bord}.

  Let $p=(p_1,\dots,p_n)$ be a point in $R_1 \times
  \cdots \times R_n$ such that, for each $1 \leq j \leq n$, $p_j$ is a limit point of $R_j\setminus X_j$.
  Also by hypothesis, we can choose $p_j$ to belong to $\bdy{X}_j$.
  Let $(U_j,\psi_j)$ be holomorphic coordinate charts of $R_j$ chosen in such a way that:
  \begin{itemize}
    \item $p_j \in U_j$; and
    \item Each $U_j$ is biholomorphic to a disk.
  \end{itemize}
  Let $W_j := U_j \cap X_j$ and $V_j := \psi_j(W_j)$. 
  For $(z_1,\dots,z_n) \in V_1 \times \dots \times V_n$, let
  \begin{equation}\label{e:g}
    g(z_1,\dots,z_n) :=  F(\psi_1^{-1}(z_1),\dots,\psi_n^{-1}(z_n)).
  \end{equation}
  
  Fix a point $q\in {\bdy X_n}\cap U_n$. Consider a sequence $\{w_\nu\} \subset V_n$ such that
  $w_\nu \to \psi_n(q)$.
  Let
  \[
    \phi_\nu :V_1 \times \dots \times V_{n-1} \to \Sym^n(Y) := 
    g(z_1,\dots,z_{n-1},w_\nu).
  \]
  We claim that we can extract a subsequence $\{w_{\nu_m}\}\subset \{w_\nu\}$ such that
  $\{\phi_{\nu_m}\}$ converges
  uniformly on compacts to a holomorphic mapping $h  :V_1 \times \dots \times
  V_{n-1} \to \Sym^n(S)$. To this end, fix another
  connected bordered Riemann surface, $Y^*\subset S$, with $\smoo^2$-smooth boundary
  such that $Y\Subset Y^*$. 
  By Theorem~\ref{t:symm_taut}, both $\Sym^n(Y)$ and
  $\Sym^n(Y^*)$ are taut. Owing to the tautness of $\Sym^n(Y)$, and
  as $F$ is proper, we can extract a subsequence $\{w_{\nu_m}\}\subset \{w_\nu\}$ such that
  $\phi_{\nu_m}$ is compactly divergent\,---\,i.e., given compacts
  $K_1\subset V_1 \times \dots \times V_{n-1}$ and $K_2\subset \Sym^n(Y)$, there
  exists an integer $M(K_1, K_2)$ such that
  \begin{equation}\label{e:comp_div}
    \phi_{\nu_m}(K_1)\cap K_2\,=\,\emptyset \; \; \;  \forall m\geq M(K_1, K_2).
  \end{equation}
  We now view each $\phi_{\nu_m}$ as a map into $\Sym^n(Y^*)$. This time,
  owing the tautness of $\Sym^n(Y^*)$, there exists a holomorphic map
  $h  :V_1 \times \dots \times V_{n-1} \to \Sym^n(S)$ such that\,---passing to
  a subsequence of $\{\phi_{\nu_m}\}$ and {\bf relabelling} if
  necessary\,---\,$\{\phi_{\nu_m}\}$ converges uniformly on compacts to $h$.
  This establishes our claim. From this and \eqref{e:comp_div} it follows
  that $h(V_1 \times \dots \times V_{n-1}) \subset \bdy\Sym^n(Y)$. It follows from
  Proposition~\ref{p:INDEP} that there exists a point $\xi\in \bdy Y$
  such that
  \begin{equation}\label{e:const}
    h(z)\;\;\text{is of the form $\langle \xi, \boldsymbol{\ast} \rangle$  for
    all}\;\;z \in V_1 \times \dots \times V_{n-1}.
  \end{equation}

  It is a classical fact\,---\,see \cite[Chapter~5]{jost2005riemann}, for instance\,---\,that
  there exists a bounded, non-constant function $\chi$ that is holomorphic on some open
  connected set $\Nb$ that contains $\overline{Y}$. Let $\Psi : \Sym^n(\Nb) \to \C^n$ be
  defined by
  \[
    \langle z_1,\dots,z_n \rangle \longmapsto \big(\elsym{1}(\chi(z_1),\dots,\chi(z_n)),
    \elsym{2}(\chi(z_1),\dots,\chi(z_n)),\dots, \elsym{n}(\chi(z_1),\dots,\chi(z_n))\big).
  \]
  A remark on the purpose of the map $\Psi$ is in order. If we could, by shrinking each
  $U_j$ if necessary, find a single model coordinate chart $(\OM, \varPsi)$ on $\Sym^n(Y)$
  (refer to Section~\ref{s:symm_prelim} for some remarks on the term ``model coordinate chart'')
  so that
   \begin{itemize}
     \item[$i)$] $F(W_1\times\dots\times W_n)\subset \OM$, and
     \item[$ii)$] $\overline{W}_j\cup \bdy X_j$ is indiscrete for each $j = 1,\dots, n$,
   \end{itemize}
   then the principal part of our proof would reduce to an application of 
   \cite[Theorem~1.2]{debraj2015function} by Chakrabarti--Gorai. However, it is far
   from clear that one can find coordinate charts that satisfy {\bf both} $(i)$ and
   $(ii)$. The role of the map $\Psi$ is to compensate for this difficulty.
   \smallskip
  
  \noindent{{\bf Step 1.} {\em Finding local candidates for $F_1,\dots,F_n$}}
  \vspace{0.5mm}
  
  \noindent{The argument at this stage of our proof closely follows that of
  Edigarian--Zwonek \cite{edigarian2005geometry} and Chakrabarti--Gorai 
  \cite{debraj2015function}. But since we must modify the map $g$ (see
  \eqref{e:g} above) in order to use the latter
  argument\,---\,which has consequences on what follows\,---\,we shall present
  parts of this argument in some detail. We begin by defining $G := \Psi\circ g$
  (the need for this map is hinted at by our preceding remarks). 
  By the definition of the map $\Psi$, its holomorphic derivative 
  is non-singular on an open dense subset of
  $\Sym^n(\Nb)$. Therefore\,---\,since $F$ is a proper holomorphic map\,---\,the complex
  Jacobian of $G$ does not vanish identically on
  $V_1 \times \dots \times V_n$. We expand this latter
  determinant along the last column to conclude that there exists $\mu \in
  \{1,\dots,n\}$ such that
  \begin{equation}\label{e:mu}
    \det \left[ \partl{G_i}{z_j}\right]_{i=1,\dots,n,\,i \neq \mu,\,j = 1,\dots,n-1}
    \not \equiv 0 \;\; \text{on $V_1 \times \dots \times V_n$}.
  \end{equation}}
    
  Let us write $\theta := \Psi \circ h$, $\theta^{(m)} := \Psi \circ \phi_{\nu_m}$, $m = 1, 2, 3,\dots$,
  and $\patch := V_1 \times \dots \times V_{n-1}$.
  Owing to \eqref{e:const}, there exists a $C \in \C$ such that
  \begin{equation}\label{e:polybdy}
    C^n - C^{n-1}\theta_1+ \dots + (-1)^{n-1}C\theta_{n-1} + (-1)^n \theta_n\,\equiv\,0 \; \;\text{on 
    $\patch$},
  \end{equation}
  where $\theta = (\theta_1,\dots,\theta_n)$. Differentiating with respect to $z_j$,
  $j = 1,\dots,n-1$, we get 
  \begin{equation}\label{e:sys1}
    -C^{n-1} \partl{\theta_1}{z_j} + \dots + (-1)^{n-1} C \partl{\theta_{n-1}}{z_j}
    + (-1)^n \partl{\theta_n}{z_j}\,\equiv\,0 \;\; \text{on $\patch$}.
  \end{equation}
  Rearranging \eqref{e:sys1}, we get the following system of $(n-1)$ equations:
  \begin{equation}\label{e:sys2}
    \sum_{k =1,\dots,n,\,k \neq \mu} (-1)^kC^{n-k}\partl{\theta_k}{z_j}\,=\,(-1)^{\mu+1}C^{n-\mu}
    \partl{\theta_\mu}{z_j} \;\; \text{on $\patch$}, \;\; j = 1,\dots,n-1. 
  \end{equation}
   
  Given an $(n-1)\times n$ matrix $B$ and 
  $l \in\{1,\dots,n\}\setminus\{\mu\}$,
  denote by $\Delta_l(B)$ the determinant of the
  $(n-1) \times (n-1)$ matrix obtained by:
  \begin{itemize}
    \item deleting the $\mu$-th column of $B$; and
    \item replacing the $l$-th column by the $\mu$-th column of $B$.
  \end{itemize}
  Denote by $\Delta_\mu(B)$ the
  determinant of the  $(n-1)\times (n-1)$ matrix obtained by deleting the
  $\mu$-th column of $B$. Note that each of the functions $\Delta_j$ is a
  polynomial in the entries of the matrix $B$. 
  
  We now introduce the $(n-1)\times n$ matrices
  \[
    D_{n-1}\theta(z^\prime)\,:=\,\left[
    \partl{\theta_k}{z_j}(z^\prime)\right]_{1\leq j\leq n-1,\,1\leq k\leq n} \; \; \text{and}
    \; \;
    {}^m\!D_{n-1}\theta(z^\prime)\,:=\,\left[
    \partl{\theta^{(m)}_k}{z_j}(z^\prime)\right]_{1\leq j\leq n-1,\,1\leq k\leq n},
  \]
  where $z^\prime := (z_1,\dots,z_{n-1})$. We also set:
  \[
    \mathfrak{A}\,:=\,\big\{z^\prime\in \patch : \Delta_\mu\big(D_{n-1}\theta(z^\prime)\big) = 0\big\}.
  \]
  
  Depending on $\mathfrak{A}$, we need to consider two cases.
  
  \noindent{{\bf Case 1.} $\mathfrak{A}\varsubsetneq \patch$.}
  \vspace{0.5mm}

  \noindent{By applying Cramer's rule to the system described by \eqref{e:sys2},
  we get:
  \[
    (-1)^l C^{n-l}\,=\,(-1)^\mu C^{n-\mu}\frac{\Delta_l(D_{n-1}\theta)}{\Delta_\mu
    (D_{n-1}\theta)} \;\; \text{on $(\patch\!\setminus\!\mathfrak{A})$ and} \;\; l\in 
    \{1,\dots,n\}\!\setminus\!\{\mu\}.
  \]
  If $\mu \neq 1$, we shall argue by taking $l = \mu-1$ in the above. If $\mu = 1$,
  we shall take $l = 2$. We
  shall first consider the case $\mu\neq 1$. In this case, the above equation gives
  \begin{equation}\label{e:relation}
    -C \Delta_\mu(D_{n-1}\theta) = \Delta_{\mu-1}(D_{n-1}\theta) \;\; \text{on $\patch$}. 
  \end{equation}}
  
  \noindent{{\bf Case 2.} $\mathfrak{A} = \patch$.}
  \vspace{0.5mm}
   
  \noindent{As in Case~1, we assume $\mu\neq 1$. Since the system \eqref{e:sys2}\,---\,treating $C$
  as the indeterminate\,---\,admits a solution, $\Delta_{\mu}(D_{n-1})\equiv 0$ forces on us the
  conclusion \eqref{e:relation} for trivial reasons.}
  
  So, in each of the above cases, we get the identity \eqref{e:relation}.
  Differentiating this identity with respect to $z_j$ and eliminating $C$, we get
  the relations
  \[
    \Delta_\mu(D_{n-1}\theta) \partl{\Delta_{\mu-1}(D_{n-1}\theta)}{z_j} - \Delta_{\mu-1}(D_{n-1}\theta)
    \partl{\Delta_\mu(D_{n-1}\theta)}{z_j}\,\equiv\,0 \;\; \text{on $\patch$}, \;\; j = 1,\dots,n-1. 
  \]
  The left-hand sides of the above relations are constituted of polynomial expressions involving
  \[
    \lim_{m\to \infty}g_s\left(z_1,\dots,z_{n-1},  w_{\nu_m}\right), \; \; s = 1,\dots,n,
  \]
  their compositions with the function $\chi$, 
  and their partial derivatives (with respect to $z_1,\dots,z_{n-1}$) up to order two. Hence, by
  Weierstrass's theorem on the derivatives of holomorphic functions, we have
  \begin{align}
    \lim_{m\to \infty}&\Delta_\mu({}^m\!D_{n-1}\theta)(z^\prime)
    \partl{\Delta_{\mu-1}({}^m\!D_{n-1}\theta)}{z_j}(z^\prime)
    - \Delta_{\mu-1}({}^m\!D_{n-1}\theta)(z^\prime)
    \partl{\Delta_\mu({}^m\!D_{n-1}\theta)}{z_j}(z^\prime) \notag \\
    =&\,\Delta_\mu(D_{n-1}\theta)(z^\prime) \partl{\Delta_{\mu-1}(D_{n-1}\theta)}{z_j}(z^\prime)
    - \Delta_{\mu-1}(D_{n-1}\theta)(z^\prime)
    \partl{\Delta_\mu(D_{n-1}\theta)}{z_j}(z^\prime) \notag \\
    =&\,0 \; \; \; \forall z^\prime\in \patch, \; \; j = 1,\dots,n-1. \label{e:lim_0}
  \end{align}
  
  Consider the functions $\tau_j : V_1 \times \dots \times V_{n} \to \C$ defined as follows:
  \begin{align*}
    \tau_j(z^\prime, z_n)\,:=\,\Delta_\mu & (D_{n-1}G)(z^\prime, z_n)
    \partl{\Delta_{\mu-1}(D_{n-1} G)}{z_j}(z^\prime, z_n)\\
    & - \Delta_{\mu-1}(D_{n-1} G)(z^\prime, z_n)
         \partl{\Delta_\mu(D_{n-1}G)}{z_j}(z^\prime, z_n),
  \end{align*}
  for each $j = 1,\dots,n-1$. Here, $D_{n-1}G(\bcdot, z_n)$ is an $(n-1)\times n$ matrix
  that is defined in the same way as $D_{n-1}\theta$.
  Observe that \eqref{e:lim_0} holds for any subsequence 
  $\{w_{\nu_m}\}\subset \{w_\nu\}$ with the properties discussed right after \eqref{e:g},
  where $V_n\ni w_\nu\to q$. Finally, as $q\in \bdy X\cap U_n$ was picked arbitrarily,
  \eqref{e:lim_0} implies that
  \[
    \tau_j(z^\prime, \zt)\longrightarrow 0 \; \; \text{as $\zt\to \psi(U_n)\cap \bdy V_n$
    for each $z^\prime\in \patch$},
  \]
  and for each $j = 1,\dots,n-1$. 
  Thus we can extend each $\tau_j$ to a continuous function $\widetilde{\tau}_j$
  defined on $V_1 \times \dots \times V_ {n-1} \times \psi(U_n)$ by setting
  $\widetilde{\tau}_j(z^\prime, z_n) = 0$ whenever $z_n \in \psi(U_n) \setminus V_n$. By Rado's
  theorem\,---\,see \cite[Chapter~4]{narasimhan1971SCV}\,---\,$\widetilde{\tau}_j$ is holomorphic
  on $V_1 \times \dots \times V_{n-1} \times \psi(U_n)$. Let us now fix
  $z^\prime\in \patch$ and $j\,:\,1\leq j\leq n-1$. By construction,
  $\psi(U_n) \setminus V_n$ has at least one limit point in $\psi(U_n)$. Thus, by the identity
  theorem, $\widetilde{\tau}_j(z^\prime, \bcdot)$ is identically $0$. As this holds true
  for every $z^\prime$ and $j$, it follows that each $\tau_j$ is identically $0$.
  
  Set 
  \[
    \gamma_n := -\frac{\Delta_{\mu-1}(D_{n-1} G)}{\Delta_\mu(D_{n-1}
    G)}.
  \]
  The function $\gamma_n$ is well-defined on the set 
  $(V_1 \times \dots \times V_n)\setminus \mathcal{A}$, where 
  \[
    \mathcal{A}\,:=\,\left\{z \in V_1 \times \dots \times V_n: \Delta_\mu\big(D_
    {n-1}G(z)\big) = 0\right\}.
  \]
  By \eqref{e:mu}, $\mathcal{A}$ is a proper analytic subvariety of
  $V_1 \times \dots \times V_n$. Observe that
  $\left.\tau_j\right|_{(V_1 \times \dots \times V_n)\setminus \mathcal{A}}$ is
  the numerator of  $\partl{\gamma_n}{z_j}$, whence
  \[
    \partl{\gamma_n}{z_j}\,\equiv\,0 \;\; \text{on $(V_1 \times \dots \times V_n)\setminus
    \mathcal{A}$},
  \]
  for $j=1, \dots, n-1$. Since $\mathcal{A}$ is a proper analytic subvariety, this implies
  that $\gamma_n$ depends only on 
  $z_n$\,---\,in the sense of Definition~\ref{d:deps_only}\,---\,on each
  set of the form $\mathscr{M}\setminus \mathcal{A}$, where
  $\mathscr{M}$ is a connected component of $V_1 \times \dots \times V_n$.
  
  Appealing to \eqref{e:relation}, and arguing in the same manner as above, we get
  \[
    \gamma_n(z^\prime, \zt)\longrightarrow C \; \; \text{as $\zt\to \psi(U_n)\cap \bdy V_n$
    for each $z^\prime\in \patch$},
  \]
  where, we now recall, $C$ satisfies the equation \eqref{e:polybdy}.
  Again, by an argument involving Rado's
  theorem\,---\,see \cite{edigarian2005geometry} or \cite{debraj2015function}\,---\,that
  is analogous to the one above, it follows that
  \begin{multline} \label{eq:gamma}
    \gamma_n^n(z) - \gamma_n^{n-1}(z)G_1(z) + \dots +
    (-1)^{n-1}\gamma_n(z)G_{n-1}(z) \\
    + (-1)^n G_n(z)\,\equiv\,0 \; \; \forall z\in 
    (V_1 \times \dots \times V_n)\setminus \mathcal{A},
  \end{multline}
  where we write $G = (G_1,\dots, G_n)$. This
  shows that $\gamma_n((V_1 \times \dots \times V_n) \setminus \mathcal{A})
  \subset \chi(Y)$ which, by the choice of $\chi$, is bounded. 
  By Riemann's removable singularities theorem,
  $\gamma_n$ extends to be holomorphic on $V_1\times\dots \times V_n$. 

  A completely analogous argument can be given\,---\,which results in a slightly
  different expression for $\gamma_n$\,---\,when $\mu = 1$ (in which case, we take
  $l = 2$, $l$ as introduced at the beginning of Step~1).

  Repeating this argument with some $i$ replacing $n$ above 
  yields us maps $\gamma_i: V_1\times\dots \times V_n\to \C$ that satisfy equations
  analogous to \eqref{eq:gamma}.
  What we have at this stage is summarized by 
  the following commutative diagram:
  
  \begin{figure}[ht]
    \begin{tikzcd}
      X_1\times\dots\times X_n \arrow[r, "F"]
      & \Sym^n(Y) \arrow[rr, "\Psi"]
      &
      & \Cn \\
      & Y^n \arrow[rr, "{(\chi\circ \pi_1,\dots,\,\chi\circ\pi_n)}"]
         \arrow[u, "\symq"]
      &  
      & \Cn \arrow[u, "{\pi^{(n)}}"] \\
      & & \\
      V_1\times\dots\times V_n \arrow[uuu, "{(\psi^{-1}_1\circ\pi_1,\dots,\,\psi^{-1}_n\circ\pi_n)}"]
      \arrow[uurrr, "{(\gamma_1,\dots,\gamma_n)}"]
    \end{tikzcd}
  \end{figure}
  \noindent{where we use $\pi_j$, $j = 1,\dots,n$, to denote the projection onto the $j$-th coordinate
  (where the product domain in question is understood from the context). Let us write:
  \begin{align*} 
    \mathscr{C}\,&:=\,\text{the set of critical points of $\symq : Y^n \to \Sym^n(Y)$}, \\
    \mathscr{C^*}\,&:=\,\text{the set of critical points of $\pi^{(n)} : \Cn \to \Cn$}. 
  \end{align*}
  Here $\pi^{(n)}$ is as introduced in Section~\ref{s:intro}.
  We now find connected open sets
  $W^*_j\subset W_j$, $j = 1,\dots, n$, that are so small that:
  \begin{itemize}
    \item[$a)$] $F(W^*_1\times\dots\times W^*_n)\cap \symq(\mathscr{C})\cap
    \Psi^{-1}\big(\pi^{(n)}(\mathscr{C}^*)\big) = \emptyset$;
    \vspace{0.5mm}
    
    \item[$b)$] $\pi^{(n)}$ is invertible on $\Psi\big(F(W^*_1\times\dots\times W^*_n)\big)$; and
    \vspace{0.5mm}
    
    \item[$c)$]  The map $(\chi\circ\pi_1,\dots, \chi\circ\pi_n)$ is invertible on each image
    of $\Psi\big(F(W^*_1\times\dots\times W^*_n)\big)$ under a branch of a local inverse of
    $\pi^{(n)}$ that intersects the image of $(\chi\circ\pi_1,\dots, \chi\circ\pi_n)$.
  \end{itemize} 
  Let $\big(\pi^{(n)}\big)^{-1}_s$, $s = 1,\dots, n!$, denote the branches introduced in $(c)$. 
  The definition of the map $\Psi$ ensures that, in fact, the images of
  $\Psi\big(F(W^*_1\times\dots\times W^*_n)\big)$ under each $\big(\pi^{(n)}\big)^{-1}_s$
  are contained in $(\chi\circ\pi_1,\dots, \chi\circ\pi_n)(Y)$. From this
  and a routine diagram-chase\,---\,since, by construction, the arrow representing
  $\pi^{(n)}$ can be reversed on $\Psi\big(F(W^*_1\times\dots\times W^*_n)\big)$\,---\,we
  see that there exists a number $s^0$, $1\leq s^0\leq n!$ such that
  \[
    (\gamma_1,\dots, \gamma_n)\circ \big(\psi_1(W^*_1)
    \times\dots\times \psi_n(W^*_n)\big)\,=\,\big(\pi^{(n)}\big)^{-1}_{s^0}\big(
    \Psi(F(W^*_1\times\dots\times W^*_n))\big).
  \]
  Thus, by $(c)$, there is a local holomorphic inverse\,---\,call it $\mathscr{I}
  \equiv (\mathscr{I}_1,\dots, \mathscr{I}_n)$\,---\,of
  $(\chi\circ\pi_1,\dots, \chi\circ\pi_n)$ such that the maps
  \[
    {\sf f}_j\,:=\,\mathscr{I}_j\circ (\gamma_1,\dots, \gamma_n)\circ (\psi_1\circ\pi_1,
    \dots, \psi_n\circ\pi_n)
  \]
  are well-defined on $W^*_1\times\dots\times W^*_n$  and holomorphic, $j = 1,\dots,n$.
  From the above commutative diagram, we see that
  \[
    \left.F\right|_{W^*_1\times\dots\times W^*_n} = \symq\circ({\sf f}_1,\dots, {\sf f}_n).
  \]}
  
  Since, by construction, $\psi_1(W^*_1)\times\dots\times \psi_n(W^*_n)$ lies in a connected
  component of $V_1\times\dots\times V_n$,  $\gamma_j$ depends only on $z_j$ 
  on $\psi_1(W^*_1)\times\dots\times \psi_n(W^*_n)$ for each
  $j = 1,\dots, n$. Then, owing to the structure of the map $(\chi\circ\pi_1,\dots, \chi\circ\pi_n)$,
  of which $\mathscr{I}$ is a local inverse, it follows that
  \begin{multline}\label{e:indep}
    \text{for each $j$, $j = 1,\dots,n$, the map ${\sf f}_j : W^*_1\times\dots\times W^*_n
    \to Y_j$ depends} \\
    \text{only on the $j$-th coordinate on $W^*_1\times\dots\times W^*_n$.}
  \end{multline}
  
  \noindent{{\bf Step 2.} {\em Establishing the (global) existence of $F_1,\dots,F_n$}}
  \vspace{0.5mm}
  
  \noindent{We abbreviate $X_1\times\dots\times X_n$ to $X$. Let
  $\excep := F^{-1}\big(\symq(\mathscr{C})\big)$ ($\mathscr{C}$ is as introduced
  above), which
  is a proper analytic subset of $X$. If $x \in X\setminus \excep$, then
  we can find a connected product neighbourhood $\OM_x$ of $x$ such that the map
  $F|_{\OM_x}$ lifts to $Y^n$ (i.e., it admits a holomorphic map
  $f : \OM_x\to Y^n$ such that $\symq\circ f = F|_{\OM_x}$).}
  
  Fix an $x_0 \in X\setminus \excep$. Consider any path 
  $\Gamma: [0,1] \to X\setminus \excep$ such that $\Gamma(0)$ is in $W^*_1 \times
  \dots \times W^*_n$ and $\Gamma(1) = x_0$. Here,
  $W^*_j\subset X_j$, $j = 1,\dots, n$, are the domains introduced towards the end of the
  argument in Step~1.
  We can cover $\Gamma([0,1])$ by finitely many product neighbourhoods\,---\,call
  them $\OM^0, \OM^1, \dots \OM^s$\,---\,on which the map $F$ lifts to $Y^n$.
  Moreover, it is easy to see that we can find
  $\OM^0, \OM^1, \dots \OM^s$ and lifts $(f_1^i,\dots, f_n^i) : \OM^i \to Y^n$
  of $F|_{\OM^i}$ to $Y^n$ for each $i$ such that:
  \begin{itemize}
    \item $\OM^0 = W^*_1 \times\dots \times W^*_n$;
    \vspace{0.5mm}
    
    \item $(f_1^0,\dots, f_n^0) : \OM^0 \to Y^n$ is the map $({\sf f}_1,\dots, {\sf f}_n)$
    provided by Step~1;
    \vspace{0.5mm}
    
    \item $\OM^i\cap \OM^{i-1} \neq \emptyset$ for $i = 1,\dots,s$;
    \vspace{0.5mm}
    
    \item For each $i = 1,\dots,s$,
    $f^{i}_j\big|_{K^i} \equiv f^{i-1}_j\big|_{K^i}$ for each $j = 1,\dots,n$,
    where $K^i$ is some connected component of $\OM^{i}\cap \OM^{i-1}$.
  \end{itemize}
  Then, owing to \eqref{e:indep}, it follows from the identity theorem and
  induction that each $f^s_j$ depends only on the $j$-th coordinate on $\OM^s$.
  
  In short, given any $x_0 \in X \setminus\excep$, we can find a
  product neighborhood $N = N_1 \times \dots \times N_n\ni x_0$ and maps
  $f_j : N \to Y$ that depend only on the $j$-th coordinate on $N$
  such that $F|_N = \symq\circ (f_1,\dots,f_n)$.
  \vspace{1mm}
  
  \noindent{{\bf Claim.} {\em This $(f_1,\dots, f_n)$ does not depend on the choice of path 
  $\Gamma$ joining $x_0$ to $W^*_1 \times\dots \times W^*_n$ or the choice of $\OM^i$,
  $i = 1,\dots,s$, covering $\Gamma([0,1])$.}}
  \vspace{0.5mm}
  
  \noindent{To see this, suppose $(\varphi_1,\dots, \varphi_n)$ is a lift of $F$ to $Y^n$ on a
  neighbourhood of $x_0$ obtained by carrying out the above procedure along some different path
  or via a different cover of $\Gamma([0,1])$. Then, there exists a permutation
  $\rho$ of $\{1,\dots,n\}$ such that
  \[
    (\varphi_1,\dots, \varphi_n)\,\equiv\,(f_{\rho(1)},\dots, f_{\rho(n)}) \; \; \text{on a
    neighbourhood of $x_0$}.
  \]
  Now, $\varphi_j$ depends {\bf only} on the $j$-coordinate. The above equation implies
  that $\varphi_j$ depends only on the $\rho(j)$-th coordinate, $j = 1,\dots,n$. This
  is impossible unless $\rho$ is the identity permutation. Hence the claim.}
  \vspace{1mm}
  
  Since the $x_0\in X \setminus\excep$ mentioned above is completely arbitrary, it follows
  from the above Claim that we have holomorphic maps $\widetilde{F}_j : 
  X\setminus \excep \to Y$, $j = 1,\dots n$, such that
  $\widetilde{F}_j$ depends only on the $j$-th coordinate on $X \setminus\excep$ (in
  the sense of Definition~\ref{d:deps_only})
  and such that
  \begin{equation}\label{e:prelim_rel}
    F|_{X\setminus \excep} = \symq\circ(\widetilde{F}_1,\dots, \widetilde{F}_n).
  \end{equation}
   

  By Lemma~\ref{l:embed}, $Y$ is hyperbolically embedded in $S$ ($S$ is as
  introduced at the beginning of this proof). Then, by
  Results~\ref{r:kwack}~and~\ref{r:merhyp}, each $\widetilde{F}_j$ extends
  to a holomorphic map on $X_j$, $j = 1,\dots, n$. By continuity, we can now
  view these extended maps as holomorphic maps $F_j : X_j\to Y$. In view of
  \eqref{e:prelim_rel}, we have our result. The properness of each of the maps
  $F_j$ is straightforward.
\end{proof}

\section*{Acknowledgements}
A part of the research in this paper was conducted during a visit by Gautam Bharali
and Indranil Biswas to the International Centre for Theoretical Sciences (ICTS) to
participate in the meeting {\em Complex Geometry} in March 2017
(programme code: {\bf ICTS/Prog-compgeo/2017/03}). They thank the ICTS for
its support. Gautam Bharali is supported by a Swarnajayanti Fellowship (grant
no.~DST/SJF/MSA-02/2013-14). Indranil Biswas is supported by a J.C.~Bose Fellowship.
Divakaran Divakaran is supported by a project grant from the Indian Institute of
Science Education and Research Bhopal (grant no.~IISERB/INS/MATH/2016091).
Jaikrishnan Janardhanan is supported by a DST-INSPIRE fellowship from the Department
of Science and Technology, India.

\bibliographystyle{amsalpha} \bibliography{bordered}

\end{document}